\newcommand{\Z}{\mathbb{Z}}
\newcommand{\F}{\mathbb{F}}
\newtheorem{thm}{Theorem} 
\newtheorem{cor}{Corollary}
\newtheorem{lem}{Lemma}
\newtheorem{exam}{Example}
\newtheorem{rem}
{Remark}
\journalname{Designs, Codes and Cryptography}
\begin{document}

\title{Large classes of permutation polynomials over $\mathbb{F}_{q^2}$
\thanks{This work was supported in part by the National Natural Science Foundation of China
(Grant Nos. 11371106, 11271142, 61363069) and the Guangdong Provincial Natural
Science Foundation (Grant No. S2012010009942). }
}


\author{Yanbin Zheng \and Pingzhi Yuan \and Dingyi Pei}


\institute{Y. Zheng (\Letter)
              \at
              Guangxi Key Laboratory of Trusted Software, Guilin University of Electronic Technology,
              Guilin, China\\
              School of Computer Science and Engineering, Guilin University of Electronic Technology, Guilin, China\\
              \email{zhengyanbin16@126.com}           
           \and
           P. Yuan \at
              School of Mathematics, South China Normal University, Guangzhou, China\\
              \email{mcsypz@mail.sysu.edu.cn}
           \and
             D. Pei \and Y. Zheng\at
              School of Mathematics and Information Science, Guangzhou University, Guangzhou, China\\
              Key Laboratory of Mathematics and Interdisciplinary Sciences of Guangdong Higher
              Education Institutes, Guangzhou University, Guangzhou, China  \\
             \email{gztcdpei@scut.edu.cn}
}

\date{Received: date / Accepted: 27 December 2015}

\maketitle

\begin{abstract}
  Permutation polynomials (PPs) of the form $(x^{q} -x + c)^{\frac{q^2 -1}{3}+1} +x$ over $\mathbb{F}_{q^2}$  were presented by Li, Helleseth and Tang [Finite Fields Appl. 22 (2013) 16--23]. More recently, we have constructed PPs of the form $(x^{q} +bx + c)^{\frac{q^2 -1}{d}+1} -bx$ over $\mathbb{F}_{q^2}$, where $d=2, 3, 4, 6$ [Finite Fields Appl. 35 (2015) 215--230]. In this paper we concentrate our efforts on the PPs of more general form
  \[
   f(x)=(ax^{q} +bx +c)^r \phi((ax^{q} +bx +c)^{(q^2 -1)/d}) +ux^{q} +vx~~\text{over $\mathbb{F}_{q^2}$},
   \]
  where $a,b,c,u,v \in \mathbb{F}_{q^2}$, $r \in \mathbb{Z}^{+}$, $\phi(x)\in \mathbb{F}_{q^2}[x]$ and $d$ is an arbitrary positive divisor of $q^2-1$. The key step is the construction of a commutative diagram with specific properties, which is the basis of the Akbary--Ghioca--Wang (AGW) criterion. By employing the AGW criterion two times, we reduce the problem of determining whether $f(x)$ permutes $\mathbb{F}_{q^2}$ to that of verifying whether two more  polynomials permute two subsets of $\mathbb{F}_{q^2}$. As a consequence, we find a series of simple conditions for $f(x)$ to be a PP of $\mathbb{F}_{q^2}$. These results unify and generalize some known classes of PPs.
\keywords{Permutation \and  Finite field \and Commutative diagram \and AGW criterion}
 \subclass{MSC 11T06 \and 11T71}
\end{abstract}

\section{Introduction}

For $q$ a prime power, let $\F_{q}$ be the finite field containing $q$ elements,
and $\F_{q}[x]$ the polynomial ring over $\F_{q}$.
A polynomial $f(x) \in \F_{q}[x]$ is called a permutation polynomial (PP) of $\F_{q}$ if
it permutes $\F_{q}$.
Many open problems about permutation polynomials (PPs) over finite fields can be found
in~\cite{LM88,LM93,open_Mullen}. One of these problems is how to construct new classes of PPs.
More recently, it has achieved significant progress; see for example,
\cite{AGW,CHZ14,Charpin,Dingpp,DingCode,hou,Hou12,Hou15,TZH,Wang-cyc,Wu_L-1,J.Yuan02,ZZH,ZH12,ZHC15,zyp-1,Zi09}.
A reason for such a rapid development is that PPs have some applications in
design theory~\cite{LidlFF,cmp}, coding theory~\cite{DingCode,code,SW10}, cryptography~\cite{crypto,crypto2,SSS11}, and other areas of science and engineering~\cite{LidlFF,HFF}.

Li, Helleseth and Tang presented a class of PPs of the form $(x^{q} -x + c)^{\frac{q^2 -1}{3}+1} +x$ over $\F_{q^2}$ by using the piecewise method~\cite{LHT13}. Later in~\cite{YZ15} we employed the Akbary--Ghioca--Wang (AGW) criterion to construct PPs of the form similar to
\[
P(x)=(x^{q} +bx + c)^{\frac{q^2 -1}{d}+1} -bx \text{~~over $\F_{q^2}$},
\]
where $d=2, 3, 4, 6$ and $d \mid q^2 -1$.
This paper is motivated by the question: when does $P(x)$ permute $\F_{q^2}$ for $d >6$?

In this paper we continue our investigations by using the AGW criterion, and
focus our attention on the PPs of the form
\begin{equation} \label{fx}
f(x)=(ax^{q} +bx +c)^r \phi((ax^{q} +bx +c)^{(q^2 -1)/d}) +ux^{q} +vx \text{~~over $\F_{q^2}$},
\end{equation}
where $a,b,c,u,v \in \F_{q^2}$, $r$ is a positive integer, $\phi(x)$ is an arbitrary polynomial over $\F_{q^2}$, and $d$ is an arbitrary positive divisor of $q^2-1$. Obviously, $f(x)$ is a generalization of $P(x)$.

\subsection{AGW criterion and the reduction of problem}

The following lemma was proved in~\cite{AGW} by Akbary, Ghioca and Wang,
which is referred to as the  AGW criterion in~\cite{HFF}.
For further results about the AGW criterion, we refer the reader to~\cite{YD-AGW,YD-AGW2,YZ15}.

\begin{lem}[AGW criterion]\label{thm_AGW}
Let $R$, $S$, $\bar{S}$ be finite sets with $\#S=\#\bar{S}$, and let $f: R\rightarrow R$,
$g: S\rightarrow\bar{S}$, $\theta: R\rightarrow S$ and $\bar{\theta}: R\rightarrow \bar{S}$
be mappings such that $\bar{\theta}\circ f= g\circ\theta$, i.e., the following diagram is commutative:
\[
\xymatrix{
  R\ar[d]_{\theta}\ar[r]^{f} & R\ar[d]^{\bar{\theta}}  \\
  S \ar[r]_{g}       & \bar{S} }
\]
If both $\theta$ and $\bar{\theta}$ are surjective, then the following statements are equivalent:
\begin{enumerate}[$(i)$]
  \item $f$ is bijective from $R$ to $R$ (permutes $R$).
  \item $g$ is bijective from $S$ to $\bar{S}$ and $f$ is injective on
        $\theta^{-1}(s)$ for each $s\in S$.
\end{enumerate}
\end{lem}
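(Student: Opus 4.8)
The plan is to establish the equivalence by proving the two implications directly, using nothing beyond the commutativity $\bar\theta\circ f=g\circ\theta$, the surjectivity of $\theta$ and $\bar\theta$, and the finiteness of $R$, $S$, $\bar S$ (so that a self-map, or a map between finite sets of the same size, is a bijection as soon as it is injective, and equally as soon as it is surjective). No computation is involved; the content is purely set-theoretic diagram chasing.

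First I would prove $(i)\Rightarrow(ii)$. Assuming $f$ permutes $R$, the second assertion of $(ii)$ is immediate: a globally injective map is injective on every subset, in particular on each fiber $\theta^{-1}(s)$. For the first assertion, since $\#S=\#\bar S<\infty$ it suffices to show $g$ is surjective. Given $\bar s\in\bar S$, surjectivity of $\bar\theta$ provides $r'\in R$ with $\bar\theta(r')=\bar s$; since $f$ is onto, choose $r\in R$ with $f(r)=r'$; then commutativity gives $g(\theta(r))=\bar\theta(f(r))=\bar\theta(r')=\bar s$. Hence $g$ is onto, and being a surjection between finite sets of equal cardinality it is a bijection.

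Next I would prove $(ii)\Rightarrow(i)$. Assuming $g$ is a bijection and $f$ is injective on each fiber, since $R$ is finite it suffices to prove $f$ injective. Suppose $f(r_1)=f(r_2)$. Applying $\bar\theta$ and using commutativity, $g(\theta(r_1))=\bar\theta(f(r_1))=\bar\theta(f(r_2))=g(\theta(r_2))$, so injectivity of $g$ forces $\theta(r_1)=\theta(r_2)=:s$; thus $r_1,r_2\in\theta^{-1}(s)$, and the fiber-injectivity of $f$ yields $r_1=r_2$. Therefore $f$ is injective, hence a permutation of $R$.

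The argument is elementary, so there is no real obstacle, only one point that needs care: in $(i)\Rightarrow(ii)$ one should verify \emph{surjectivity} of $g$ rather than attempt injectivity, because the naive route ``$g(\theta(r_1))=g(\theta(r_2))$'' does not by itself produce equal $f$-values, so one must instead lift $\bar s$ back through $\bar\theta$ and then through $f$ as above. One must also be careful to invoke the hypothesis $\#S=\#\bar S$ exactly at the step upgrading ``$g$ surjective'' to ``$g$ bijective'', and the finiteness of $R$ at the step upgrading ``$f$ injective'' to ``$f$ bijective'': without finiteness the implication $(ii)\Rightarrow(i)$ can fail, since fiberwise injectivity plus bijectivity of $g$ only guarantees that $f$ is injective.
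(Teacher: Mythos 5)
Your proof is correct. Note that the paper itself gives no proof of this lemma: it is quoted verbatim from Akbary, Ghioca and Wang (reference \cite{AGW}) as a known result, so there is no in-paper argument to compare against. Your diagram chase is the standard proof of the AGW criterion: in $(i)\Rightarrow(ii)$ you rightly go after surjectivity of $g$ (lifting $\bar s$ through $\bar\theta$ and then through $f$) and upgrade to bijectivity via $\#S=\#\bar S$, and in $(ii)\Rightarrow(i)$ you reduce global injectivity of $f$ to fiberwise injectivity via injectivity of $g$, then use finiteness of $R$. Each hypothesis is invoked exactly where it is needed (only the surjectivity of $\theta$ goes unused, which is harmless since it is an assumption, not something to be verified), so the argument is complete.
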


The AGW criterion gives us a recipe in which one can construct permutations of $R$ from bijections between two smaller sets $S$ and $\bar{S}$. But it requires two conditions $\bar{\theta}\circ f= g\circ\theta$ and $\#S=\#\bar{S}$. For $R=\F_{q^2}$ and the mapping induced by $P(x)$ with arbitrary $d$, it was a hard problem in the past for us to determine $\theta$, $\bar{\theta}$ and $g$ such that the previous two conditions are satisfied. This is the reason we only considered $P(x)$ with $d \le 6$ in~\cite{YZ15}.

The crucial breakthrough of this paper is that a commutative diagram with desired properties is constructed. More precisely, for $R=\F_{q^2}$ and the mapping $f$ induced by~\eqref{fx}, we find $\theta$, $\bar{\theta}$ and $g$ such that $\bar{\theta}\circ f= g\circ\theta$, $S=\bar{S}$ and $\#S=q$. Hence the upper half of Figure~\ref{bi-jht} is commutative. According to the AGW criterion,
the problem of determining whether $f(x)$ permutes $\F_{q^2}$ is reduced to another one: whether $g(x)$ permutes the smaller set $S$. The later problem is relatively simple since we need only calculate at most $q$ values to verifying it.

Based on the above result, we find $\lambda$ and $h$ such that $\lambda\circ g =h\circ\lambda$,
i.e., the lower half of Figure~\ref{bi-jht} is commutative.
By the AGW criterion, $g(x)$ permutes $S$ if and only if $h(x)$ permutes $U_{n}$,
where $n =d/\gcd(q+1,\,d)$ and $U_{n}$ is the set of $n$-th roots of unity in $\F_{q^2}$.
It is easy to check the permutation property of $h(x)$ on $U_{n}$ when $n$ is not very large.

\begin{figure}[ht!]
\[
\xymatrix{
  \F_{q^2} \ar[d]_{\theta} \ar[r]^{f} & \F_{q^2} \ar[d]^{\bar{\theta}} \\
  S \ar[d]_{\lambda} \ar[r]^{g} & \bar{S} \ar[d]^{\lambda} \\
  U_{n} \ar[r]^{h} & U_{n}   }
\]
\caption{Two commutative diagrams}\label{bi-jht}
\end{figure}
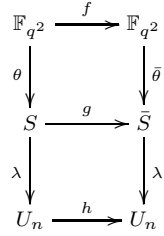
\subsection{Outline}
In Sections 2 and 3, for any $\phi(x) \in \F_{q^2}[x]$ we obtain three equivalent statements: $(i)$ $f(x)$ permutes $\F_{q^2}$; $(ii)$ $g(x)$ permutes  $S$; $(iii)$ $h(x)$ permutes~$U_{n}$.
From Section 4 to 7, we consider special forms of $\phi(x)$, and find a series of
simple conditions for $f(x)$ to be a PP of $\F_{q^2}$; for instance see Corollaries \ref{cor-4uxr}, \ref{cor-b=-1}, \ref{cor-d=6}, \ref{=1cor2} and~\ref{cor-fq2}. These results unify and generalize some PPs in~\cite{AGW,LHT13,YD-AGW,YZ15,ZH12}.

\section{The reduction from $\F_{q^2}$ to $S$}
For ease of notations, we 
let $A =bu -av$, $B =au^{q} -bv^{q}$ and $S =\{ax^q +bx +c \mid x \in \F_{q^2}\}$. 
We let $\Z^{+}$ denote the set of positive integers,
$\xi$ a primitive element of $\F_{q^2}$,
$U_{n}$ the set of $n$-th roots of unity in $\F_{q^2}$,
and $\F_{q}$ a subfield of $\F_{q^2}$.
In this section, we give the main result and its explanation.

\begin{thm}\label{thm_gene} 
Let $d \mid q^2 -1$, $r\in\Z^{+}$ and $\phi(x)\in\F_{q^2}[x]$. Let $a,b,c,u,v \in \F_{q^2}$
satisfy $ab \ne 0$, $a^{q+1} =b^{q+1}$ and $ac^{q} =b^{q}c$.
Then $f(x)$ in~\eqref{fx} is a PP of\, $\F_{q^2}$  if and only if\, $g(x)$ permutes $S$, where
\begin{equation}\label{eq-g}
g(x)=x^r\big[B\phi(x^{(q^2 -1)/d}) +A^{1-r}B^{qr}\phi(x^{(q^2 -1)/d})^{q}\big] +(u^{q+1}-v^{q+1})x.
\end{equation}
\end{thm}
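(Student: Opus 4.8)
The plan is to instantiate the AGW criterion (Lemma~\ref{thm_AGW}) with $R=\F_{q^2}$, $S=\bar S=\{ax^q+bx+c\mid x\in\F_{q^2}\}$, and the right choices of $\theta$, $\bar\theta$ and $g$. The natural guess is $\theta(x)=ax^q+bx+c$ and $\bar\theta(x)=\alpha x^q+\beta x+\gamma$ for constants $\alpha,\beta,\gamma$ to be pinned down, so that the square $\bar\theta\circ f=g\circ\theta$ closes. First I would verify that $\theta$ is a well-defined surjection onto $S$ and, crucially, that $\#S=\#\bar S$; since under the hypotheses $ab\ne0$, $a^{q+1}=b^{q+1}$ the linearized map $x\mapsto ax^q+bx$ has a kernel, $S$ is a coset of a subfield-sized set, and I must check $\bar\theta$ has the same image size. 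The hypothesis $ac^q=b^qc$ is what makes $c$ compatible with this kernel, i.e.\ $c$ lies in the relevant coset, so that $\theta$ actually surjects onto the stated $S$.

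Next I would compute $\bar\theta(f(x))$ directly. Writing $L:=ax^q+bx+c=\theta(x)$, we have $f(x)=L^r\phi(L^{(q^2-1)/d})+ux^q+vx$, so $f(x)^q=L^{qr}\phi(L^{(q^2-1)/d})^q+u^qx^{q^2}+v^qx^q=L^{qr}\phi(L^{(q^2-1)/d})^q+u^qx+v^qx^q$. Then $\bar\theta(f(x))=\alpha f(x)^q+\beta f(x)+\gamma$ expands into a combination of $L^r\phi(\cdot)$, $L^{qr}\phi(\cdot)^q$, and linear terms in $x^q,x$. The key algebraic step is that $L^{q^2}=L^q\cdot$(something), and more importantly that $L$ and $L^q$ are linked: since $ac^q=b^qc$ and $a^{q+1}=b^{q+1}$ one gets a relation of the form $L^q=(A/?)\cdot\text{(scalar)}\cdot L^{?}$ or rather that $ax^q+bx$ and its $q$-power satisfy $a^q(ax^q+bx)^q$ compares to $b(ax^q+bx)$ up to the constants $A=bu-av$, $B=au^q-bv^q$. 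I would use these relations to rewrite $L^{qr}$ in terms of $L^r$ (this is where the factor $A^{1-r}B^{qr}$ in~\eqref{eq-g} comes from) and to collapse the linear part $\alpha(u^qx+v^qx^q)+\beta(ux^q+vx)$ into a scalar multiple of $L=ax^q+bx+c$ plus the constant $(u^{q+1}-v^{q+1})x$ surviving — forcing the choice $\alpha,\beta,\gamma$ and simultaneously reading off $g$ as in~\eqref{eq-g}.

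Once the commuting square is established with $\theta,\bar\theta$ surjective and $\#S=\#\bar S$, Lemma~\ref{thm_AGW} gives: $f$ permutes $\F_{q^2}$ iff $g$ permutes $S$ \emph{and} $f$ is injective on each fiber $\theta^{-1}(s)$. So the remaining point is to show the fiber-injectivity is automatic. A fiber $\theta^{-1}(s)$ is a coset of $\ker(x\mapsto ax^q+bx)$, which is a one-dimensional $\F_q$-space; on it $L$ is constant, hence $f(x)$ restricted to the fiber equals (constant)$+ux^q+vx$, an affine-linear map, and I would check it is injective on that coset precisely when the linear part $ux^q+vx$ is injective there — but any two points of the fiber differ by $t$ with $at^q+bt=0$, and $u t^q+vt=0$ would combine with $at^q+bt=0$ to force $t=0$ unless $(u,v)$ is proportional to $(a,b)$, a degenerate case absorbed by the statement (or handled by noting $u^{q+1}-v^{q+1}$ and $A,B$ cannot all vanish). \textbf{The main obstacle} I anticipate is exactly this bookkeeping: getting the constants $\alpha,\beta,\gamma$ right and rigorously justifying the identity that converts $L^{qr}\phi(\cdot)^q$ into $A^{1-r}B^{qr}L^r\phi(\cdot)^q$ and the linear terms into $\beta L+$const, since that requires repeated use of all three hypotheses $ab\ne0$, $a^{q+1}=b^{q+1}$, $ac^q=b^qc$ together with the definitions of $A$ and $B$; everything else (surjectivity, $\#S=\#\bar S$, fiber-injectivity) should be comparatively routine.
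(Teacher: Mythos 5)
Your plan is the same as the paper's: apply the AGW criterion with $\theta(x)=ax^q+bx+c$ and $\bar\theta(x)=Ax^q+Bx+C$ (the paper takes $C=(u^{q+1}-v^{q+1})c$), derive the commuting square from the relations $a\theta(x)^q\equiv b^q\theta(x)$ and $A\theta(x)^{qr}\equiv A^{1-r}B^{qr}\theta(x)^r$, show $S=\bar S$ is a coset $\xi^{-i}\F_q$ of the subfield, and observe that on each fiber $f$ is affine with linear part governed by $A=bu-av$. All of that matches.

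The one genuine flaw is your treatment of the degenerate case. You correctly identify that $f$ fails to be injective on the fibers exactly when $(u,v)$ is ``proportional'' to $(a,b)$ in the sense $bu-av=A=0$, but you then dismiss this by asserting that $u^{q+1}-v^{q+1}$, $A$ and $B$ ``cannot all vanish.'' That is false, and in fact the opposite is true: under the hypothesis $a^{q+1}=b^{q+1}$, the single condition $A=0$ forces $B=0$ and $u^{q+1}=v^{q+1}$ (from $bu=av$ one gets $B=u^q(a^{q+1}-b^{q+1})/a^q=0$ and $v^{q+1}=u^{q+1}$). Since the theorem is an equivalence with no hypothesis excluding $A=0$, this case must be settled, and the correct resolution is not that it cannot occur but that both sides of the equivalence fail simultaneously: when $A=0$ one has $g\equiv 0$, which does not permute the $q$-element set $S$, while $f$ is constant on each $q$-element fiber and hence not a permutation of $\F_{q^2}$. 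This is exactly step $(iv)$ of the paper's proof; with that repair your argument goes through.
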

\begin{proof}
To apply AGW criterion, we need to construct a commutative diagram.
Let 
\[
\theta(x) =ax^q +bx +c, \quad
\bar{\theta}(x)=Ax^q +Bx +C, \quad
\bar{S}=\{\bar{\theta}(x) \mid x \in \F_{q^2}\},
\]
where $C= (u^{q+1} -v^{q+1})c$. The proof is divided into four steps.

$(i)$ First, we prove that $\bar{\theta}\circ f= g\circ\theta$,
i.e., the following diagram is commutative:
 \[
  \xymatrix{
  \F_{q^2} \ar[d]_{\theta}\ar[r]^{f}  &  \F_{q^2}\ar[d]^{\bar{\theta}}  \\
  S        \ar[r]_{g}         &  \bar{S} }
 \]
For convenience, let $f_1(x)\equiv f_2(x)$ denote $f_1(x)\equiv f_2(x) \pmod{x^{q^2}-x}$. Since $a^{q+1} = b^{q+1}$ and $ac^{q} = b^{q}c$, we have
\begin{equation}\label{eq_image}
\begin{split}
a\theta(x)^q
&= a(ax^q +bx +c)^q
\equiv a(a^q x +b^q x^q +c^q) \\
&= ab^q x^q +a^{q+1} x +ac^q
= ab^q x^q +b^{q+1} x +b^qc
= b^q \theta(x).
\end{split}
\end{equation}
Therefore $a\theta(x)^{q} \equiv b^{q}\theta(x)$ and so $a^{q}\theta(x) \equiv b\theta(x)^{q}$. Furthermore,
\[\begin{split}
A\theta(x)^{q} &=(bu-av)\theta(x)^{q}=(b\theta(x)^{q})u -(a\theta(x)^{q})v \\
&\equiv(a^{q}\theta(x))u -(b^{q}\theta(x))v
=(a^{q}u -b^{q}v)\theta(x)= B^{q}\theta(x).
\end{split}\]
Hence
\begin{equation}\label{eq-rq}
A\theta(x)^{qr} =A^{1-r}(A\theta(x)^{q})^{r}
\equiv A^{1-r}(B^{q}\theta(x))^{r} =A^{1-r}B^{qr}\theta(x)^{r}.
\end{equation}
Let $\psi(x)= ux^{q} +vx$. Then
\begin{equation}\label{eq_bar_phi2}
\begin{split}
&\quad A\psi(x)^{q} +B\psi(x) +C\\
&= (bu -av)(ux^{q} +vx)^{q} +(au^{q} -bv^{q})(ux^{q} +vx) +(u^{q+1} -v^{q+1})c\\
&\equiv (bu -av)(u^{q}x +v^{q}x^{q}) +(au^{q} -bv^{q})(ux^{q} +vx) +(u^{q+1} -v^{q+1})c\\
&= (u^{q+1}-v^{q+1})ax^{q} +(u^{q+1}-v^{q+1})bx +(u^{q+1} -v^{q+1})c\\
&= (u^{q+1}-v^{q+1})(ax^{q} +bx +c)\\
&= (u^{q+1}-v^{q+1})\theta(x).
\end{split}
\end{equation}
Let $m=(q^2 -1)/d$. Then, by \eqref{fx}, $f(x) =\theta(x)^r\,\phi(\theta(x)^{m})+\psi(x)$ and,
by \eqref{eq-rq} and \eqref{eq_bar_phi2},
\[ \begin{split}
\bar{\theta}(f(x))
&= Af(x)^q +Bf(x) +C\\
&= A[\theta(x)^r\,\phi(\theta(x)^{m}) +\psi(x)]^{q} +B[\theta(x)^r\,\phi(\theta(x)^{m}) +\psi(x)] +C\\
&= A\theta(x)^{qr}\phi(\theta(x)^{m})^{q} +B\theta(x)^r\phi(\theta(x)^{m}) +A\psi(x)^{q} +B\psi(x) +C\\
&\equiv A^{1-r}B^{qr}\theta(x)^{r}\phi(\theta(x)^{m})^{q} +B\theta(x)^r\phi(\theta(x)^{m})
  +A\psi(x)^{q} +B\psi(x) +C\\
&= \theta(x)^r\big[ A^{1-r}B^{qr}\phi(\theta(x)^{m})^{q} +B\phi(\theta(x)^{m})\big] +(u^{q+1}-v^{q+1})\theta(x).
\end{split} \]
Since $$g(x)=x^r[A^{1-r}B^{qr}\phi(x^{m})^{q} +B\phi(x^{m})] +(u^{q+1}-v^{q+1})x,$$
we get $\bar{\theta}(f(x)) \equiv g(\theta(x))$, and so
$\bar{\theta}\circ f= g\circ\theta$.

$(ii)$ Now we show $S=\bar{S}$ if $A \ne 0$. It will be proved in Lemma~\ref{lem_Fq}
that $a^{q+1} = b^{q+1}$ if and only if $b = \xi^{(q-1)i}a^{q}$ for some $i\in\Z$.
According to \eqref{eq_image}, we obtain
\[ \theta(x)^{q} \equiv (b^q/a)\theta(x) =\xi^{-(q-1)i}\theta(x).\]
Hence $(\xi^{i}\theta(x))^{q} \equiv \xi^{i}\theta(x)$, and so
$\xi^{i}\theta(\alpha)\in \F_{q}$ for $\alpha \in \F_{q^2}$.
Thus there exists $e \in \F_{q}$ such that $\theta(\alpha) = \xi^{-i}e$,
i.e., $S \subseteq \{\xi^{-i}e \mid e \in \F_{q}\}$.
On the other hand, since $\theta(x)$ has degree~$q$, each image has at most $q$ preimages under $\theta$.
Hence $\#S\ge q^2 /\,q =q$, and so $S=\{ \xi^{-i}e \mid e\in\mathbb{F}_{q}\}$.
We next show that $A^{q+1}=B^{q+1}$ and $AC^{q}=B^{q}C$. Since
 \[\begin{split}
 A^{q+1}&=(bu-av)^{q}(bu-av)=b^{q+1}u^{q+1} -ab^qu^qv +(-1)^qa^qbuv^q +a^{q+1}v^{q+1},\\
 B^{q+1}&=(au^q-bv^q)^q(au^q-bv^q)=a^{q+1}u^{q+1} +(-1)^qab^qu^qv -a^qbuv^q +b^{q+1}v^{q+1},
 \end{split}\]
and $a^{q+1}=b^{q+1}$, we get $A^{q+1}=B^{q+1}$.
It follows from $ac^{q} =b^{q}c$ that $a^{q}c =bc^{q}$ and
\[\begin{split}
AC^{q}
&=(bu-av)(u^{q+1}-v^{q+1})^qc^q=(bc^qu-ac^qv)(u^{q+1}-v^{q+1})\\
&=(a^qcu-b^qcv)(u^{q+1}-v^{q+1})=(a^qu-b^qv)(u^{q+1}-v^{q+1})c=B^{q}C.
\end{split}\]
Similarly to \eqref{eq_image}, we obtain $A\theta(x)^q=B^q\theta(x)$. An argument similar to the one used above shows that $\bar{S}= \{\xi^{-i}e \mid e \in \F_{q}\}$ if $A \ne 0$. Therefore we conclude that
\begin{equation}\label{S}
S=\bar{S} =\{\xi^{-i}e \mid e \in \F_{q}\}.
\end{equation}

$(iii)$ Since $f(x) =\theta(x)^r \phi(\theta(x)^{m})+ ux^{q} +vx$, we have
\[
f(x)=\theta(x)^r \phi(\theta(x)^{m})+ (u/a)\theta(x) +(v -(bu/a))x -(cu/a).
\]
For any $x \in \theta^{-1}(s)$, $f(x)=(v -(bu/a)) x +t$, where $t=s^r \phi(s^{m})+ (u/a)s -(cu/a)$.
Thus $f(x)$ is injective on $\theta^{-1}(s)$ if and only if $v-(bu/a) \ne 0$, i.e., $A \ne 0$.

$(iv)$ If $A=0$ then $f(x)$ is not injective on $\theta^{-1}(s)$ and, by the AGW criterion, $f(x)$ is not a PP of $\F_{q^2}$. If $A=0$ then $B^{q+1} = A^{q+1}=0$ and $u^{q+1} = v^{q+1}$. Hence $g(x)=0$ and it does not permute~$S$.
Hence the theorem is true when $A=0$. If $A \ne 0$ then $f(x)$ is injective on $\theta^{-1}(s)$. By the AGW criterion and $S=\bar{S}$, $f(x)$ is a PP of $\F_{q^2}$ if and only if $g(x)$ permutes~$S$.
$\hfill{}\Box$ \end{proof}

In Theorem~\ref{thm_gene}, the problem of determining whether $f(x)$ permutes $\F_{q^2}$ is reduced to that of
establishing whether $g(x)$ permutes $S$. By~\eqref{S}, $S$ has $q$ elements. Thus we need only calculate at most $q$ values ($g(y)$ for $y\in S$) to verify whether $f(x)$ permutes $\F_{q^2}$. Hence Theorem~\ref{thm_gene} provides a relatively simple necessary and sufficient condition for $f(x)$ to be a PP of $\F_{q^2}$.

According to the commutative diagram in the proof of Theorem~\ref{thm_gene}, the polynomial $g(x)$ induces a mapping from $S$ to $S$. Hence $g(x)$ permutes $S$ if and only if $g(x)$ is injective on $S$,
which can be used to simplify the proof that $g(x)$ permutes $S$.
We next give further explanation for Theorem~\ref{thm_gene}.

\begin{lem}\label{lem_Fq}
Let $a, b, c \in \F_{q^2}$ with $ab \ne 0$. Then
\begin{enumerate}[$(i)$]
  \item $ax^{q} +bx +c$ is a PP of\, $\F_{q^2}$
        if and only if\, $a^{q+1} \neq b^{q+1}$.
  \item $a^{q+1} = b^{q+1}$ if and only if\, $b = \xi^{(q-1)i}a^{q}$ for some $i\in\Z$.
  \item If\, $b = \xi^{(q-1)i}a^{q}$, then $ac^{q} = b^{q}c$ if and only if\, $c =\xi^{-i}e$ for some $e \in \F_{q}$.
  \item The subfield\, $\F_{q} =\{0\}\cup\{\, \xi^{(q +1)j} \mid j= 1, 2,\dotsc, q-1 \}$.
\end{enumerate}
\end{lem}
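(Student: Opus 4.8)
\emph{Proof proposal.} The plan is to establish the four parts essentially independently, all of them by exploiting that $\F_{q^2}^{*}=\langle\xi\rangle$ is cyclic of order $q^2-1=(q-1)(q+1)$ together with the behaviour of the power map $x\mapsto x^{q-1}$ on it. I would begin with part $(iv)$, which is purely about the cyclic group $\F_{q^2}^{*}$: it has a unique subgroup of each order dividing $q^2-1$, and the one of order $q-1$ is $\langle\xi^{q+1}\rangle=\{\xi^{(q+1)j}:1\le j\le q-1\}$ (here $j=q-1$ yields the identity). Since $\F_{q}^{*}$ is itself a subgroup of $\F_{q^2}^{*}$ of order $q-1$, it must coincide with that set; adjoining $0$ gives $(iv)$.

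For part $(ii)$ the two facts I need are: first, $a^{q+1}\in\F_{q}$, since $(a^{q+1})^{q}=a^{q^2+q}=a^{q+1}$ using $a^{q^2}=a$; and second, that the image of the homomorphism $x\mapsto x^{q-1}$ on $\F_{q^2}^{*}$ is the unique subgroup of order $(q^2-1)/\gcd(q-1,q^2-1)=q+1$, namely $U_{q+1}=\langle\xi^{q-1}\rangle=\{\xi^{(q-1)i}:1\le i\le q+1\}$. With these, $a^{q+1}=b^{q+1}$ is equivalent to $(b/a^{q})^{q+1}=b^{q+1}/(a^{q+1})^{q}=b^{q+1}/a^{q+1}=1$, i.e.\ to $b/a^{q}\in U_{q+1}$, i.e.\ to $b=\xi^{(q-1)i}a^{q}$ for some $1\le i\le q+1$. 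Part $(iii)$ is of the same flavour: assuming $b=\xi^{(q-1)i}a^{q}$, the case $c=0$ is trivial (take $e=0$), and for $c\ne0$ I would compute $b^{q}/a=\xi^{(q-1)iq}a^{q^2}/a=\xi^{-(q-1)i}$ (using $a^{q^2}=a$ and $(q-1)q\equiv-(q-1)\pmod{q^2-1}$), so that $ac^{q}=b^{q}c$ becomes $c^{q-1}=\xi^{-(q-1)i}$, equivalently $(\xi^{i}c)^{q-1}=1$, equivalently $\xi^{i}c\in\F_{q}^{*}$; writing $\xi^{i}c=e$ gives $c=\xi^{-i}e$ with $e\in\F_{q}^{*}$, and the converse is the same computation run backwards.

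For part $(i)$ I would use that $ax^{q}+bx+c$ permutes $\F_{q^2}$ if and only if the $\F_{q}$-linearized polynomial $L(x)=ax^{q}+bx$ does (a constant shift changes nothing), and that a linearized polynomial permutes $\F_{q^2}$ exactly when it has trivial kernel. For $x\ne0$, $L(x)=0$ means $x^{q-1}=-b/a$, which is solvable precisely when $-b/a$ lies in the image $U_{q+1}$ of $x\mapsto x^{q-1}$, i.e.\ when $(-b/a)^{q+1}=1$; since $(-1)^{q+1}=1$ in $\F_{q^2}$ whether $q$ is even or odd, this is $b^{q+1}=a^{q+1}$. Hence $L$, and therefore $ax^{q}+bx+c$, is a PP of $\F_{q^2}$ if and only if $a^{q+1}\ne b^{q+1}$.

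None of the steps is genuinely hard; the one place that requires care is the exponent bookkeeping modulo $q^2-1$ in $(ii)$ and $(iii)$ — in particular the identities $(a^{q+1})^{q}=a^{q+1}$ and $\xi^{(q-1)q}=\xi^{-(q-1)}$ — together with the small observation in $(i)$ that $(-1)^{q+1}=1$ in both characteristics.
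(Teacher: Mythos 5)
Your proposal is correct and takes essentially the same route as the paper: parts $(ii)$--$(iv)$ are the same norm/cyclic-group computations (the paper phrases $(ii)$ via $\mathbf{N}_{\F_{q^2}/\F_q}(\alpha)=1\Leftrightarrow\alpha=\beta^{q-1}$, and $(iii)$ via $(\xi^i c)^q=\xi^i c$, exactly as you do). The only divergence is in $(i)$, where the paper invokes the determinant criterion for the linearized polynomial $ax^q+bx$ while you compute its kernel directly through the image of $x\mapsto x^{q-1}$; these are equivalent standard arguments and your exponent bookkeeping, including $(-1)^{q+1}=1$ in either characteristic, is sound.
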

\begin{proof}
$(i)$ $a x^{q} +b x$ is a PP of $\F_{q^2}$ if and only if
$\big|\begin{smallmatrix}
   \,a & b^q \\
   \,b  & a^q
\end{smallmatrix}\big| \ne 0$,
i.e., $a^{q+1} \ne b^{q+1}$.
$(ii)$ The norm function $\mathbf{N}_{\F_{q^2}/\F_{q}}(b/a^{q})=(b/a^{q})^{q+1} =1$
if and only if $b/a^q =e^{q-1}$ for some $e \in \F_{q^2}^{*}$.
Let $e=\xi^i$ for some $i\in\Z$. Then $b = \xi^{(q-1)i}a^{q}$.
$(iii)$ Since $a = \xi^{(q-1)i}b^{q}$, $ac^{q} = b^{q}c$ if and only if $(\xi^{i}c)^q = \xi^{i}c$,
i.e., $\xi^{i}c \in \F_{q}$. Hence $c =\xi^{-i}e$ for some $e \in \F_{q}$.
$(iv)$ Since $\xi$ is a primitive element of $\F_{q^{2}}$, $\xi^{(q +1)j}$ are all distinct for
$1 \le j \le q-1$. Also $(\xi^{(q +1)j})^q =\xi^{(q^2 +q)j} =\xi^{(q+1)j}$. Hence the result holds.
$\hfill{}\Box$ \end{proof}

According to Lemma~\ref{lem_Fq}~$(i)$, $S$ is a subset of $\F_{q^2}$,
i.e., $ax^{q} +bx +c$ is not a PP of $\F_{q^2}$, if and only if $a^{q+1} = b^{q+1}$.
From $(ii)$, if $b=a^q$ then $S=\F_q$ and $f(x)$ permutes $\F_{q^2}$ if and only if $g(x)$ permutes $\F_q$.
By $(ii)$ and $(iii)$, there are $(q^2-1)\cdot(q+1)\cdot q$ choices for $a,b,c \in \F_{q^2}$
such that $ab \ne 0$, $a^{q+1}  =b^{q+1}$  and $ac^{q} =b^{q}c$.
$(iv)$ describes the construction of the subfield $\F_{q}$ of $\F_{q^2}$,
which plays an important role in the proof of Theorem~\ref{thm_UN}.

\section{The reduction from $S$ to $U_n$}

Theorem \ref{thm_gene} requires that $b = \xi^{(q-1)i}a^{q}$ for some $i\in\Z$.
When $i=j\cdot\gcd(q+1,\,d)$, we obtain the following result by employing the AGW criterion again.

\begin{thm}\label{thm_UN}
Let $d \mid q^2-1$ and $d=n\cdot\gcd(q+1, d)$.
Let $a,b,c \in \F_{q^2}$, $j\in\Z$ satisfy $ab \ne 0$, $b =\xi^{(q-1)jd/n}a^{q}$ and $ac^{q} =b^{q}c$. Let $u$, $v \in \F_{q^2}$, $r\in\Z^{+}$ satisfy $u^{q+1}=v^{q+1}$ and $\gcd(r,(q^2-1)/d)=1$, or $u^{q+1} \ne v^{q+1}$ and $r=1$. Then $f(x)$ in~\eqref{fx} is a PP of $\F_{q^2}$  if and only if\, $h(x)$ permutes $U_{n}$, where
\begin{equation}\label{h}
   h(x)= x^r[B\phi(x) +A^{1-r}B^{qr}\phi(x)^{q} +u^{q+1} -v^{q+1}]^{(q^2-1)/d}.
\end{equation}
\end{thm}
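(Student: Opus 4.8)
The plan is to apply the AGW criterion a second time, using Theorem~\ref{thm_gene} as the starting point. By Theorem~\ref{thm_gene} we already know that $f(x)$ permutes $\F_{q^2}$ if and only if $g(x)$ permutes $S$, where by \eqref{S} and the hypothesis $b=\xi^{(q-1)j\,d/n}a^q$ we have $S=\{\xi^{-jd/n}e\mid e\in\F_q\}$, a coset of $\F_q$. So it suffices to construct a commutative diagram
\[
\xymatrix{
  S \ar[d]_{\lambda}\ar[r]^{g} & S \ar[d]^{\lambda} \\
  U_{n} \ar[r]_{h} & U_{n} }
\]
with $\lambda$ surjective, verify $\lambda\circ g=h\circ\lambda$, and check that $g$ is automatically injective on each fibre $\lambda^{-1}(s)$ under the stated hypotheses on $r$, $u$, $v$; then the AGW criterion gives the equivalence ``$g$ permutes $S$'' $\Leftrightarrow$ ``$h$ permutes $U_n$''.

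The natural choice for $\lambda$ is $\lambda(x)=x^{(q^2-1)/d}$, or rather a normalized version. Note that for $y=\xi^{-jd/n}e\in S$ with $e\in\F_q^\ast$, we have $y^{(q^2-1)/d}=\xi^{-j(q^2-1)/n}\,e^{(q^2-1)/d}$; since $e^{q-1}=1$ and $\gcd(q+1,d)=d/n$, the element $e^{(q^2-1)/d}=(e^{q+1})^{(q-1)/(d/n)\cdot\ldots}$ — more precisely $e^{(q^2-1)/d}$ ranges over $U_n$ as $e$ ranges over $\F_q^\ast$ because $(q^2-1)/d$ applied to the cyclic group $\F_q^\ast$ of order $q-1$ has image of size $(q-1)/\gcd(q-1,(q^2-1)/d)=(q-1)/((q-1)/n)=n$. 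So $\lambda$ restricted to $S\setminus\{0\}$ surjects onto $U_n$; one sets $\lambda(0)$ suitably (or argues the $0$ case separately). Pulling out the constant $\xi^{-j(q^2-1)/n}$ (which is itself an $n$-th root of unity, or can be absorbed) one gets a clean surjection $S\to U_n$. I would compute $\lambda(g(y))$ directly from \eqref{eq-g}: writing $g(y)=y^r\bigl[B\phi(y^m)+A^{1-r}B^{qr}\phi(y^m)^q\bigr]+(u^{q+1}-v^{q+1})y$ with $m=(q^2-1)/d$, one factors $y$ (resp. $y^r$) out, raises to the power $m$, and uses that $y^{mr}=y^m$ when $\gcd(r,(q^2-1)/d)=1$ on the relevant subgroup (the case $r=1$ being trivial), together with $y^m\in U_n$ and $(y^m)^q=y^m$ for such $y$, to collapse the bracket's argument $\phi(y^m)$ to $\phi$ evaluated at $\lambda(y)\in U_n$. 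This yields $\lambda(g(y))=h(\lambda(y))$ with $h$ as in \eqref{h}.

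The fibre condition is the routine but essential bookkeeping: on $\lambda^{-1}(s)$ the polynomial $g$ should be injective. Since $g(x)=x^r\Phi(x^m)+(u^{q+1}-v^{q+1})x$ with $\Phi$ a fixed polynomial and $x^m$ constant equal to (a twist of) $s$ on the fibre, $g$ restricted to the fibre looks like $x\mapsto c_1 x^r+c_2 x$ for constants $c_1,c_2$. When $u^{q+1}=v^{q+1}$ we have $c_2=0$ and injectivity of $x\mapsto c_1 x^r$ on a coset of the subgroup of order... reduces to $\gcd(r,(q^2-1)/d)=1$; when $u^{q+1}\ne v^{q+1}$ the hypothesis $r=1$ makes $g$ affine on the fibre, hence injective as long as the linear coefficient is nonzero — and here one must check $B\ne0$, which follows from $A\ne0$ (guaranteed since $u^{q+1}\ne v^{q+1}$ forces $A^{q+1}=B^{q+1}\ne0$, using the identity $A^{q+1}=B^{q+1}$ established in the proof of Theorem~\ref{thm_gene}) — actually one checks $B\phi+\ldots+u^{q+1}-v^{q+1}\ne 0$ at the relevant point, but the simplest route is to observe $g$ maps $S$ to $S$ so injectivity on $S$ is equivalent to permuting $S$, and combine with the fibre argument.

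The main obstacle I expect is keeping the powers straight: the interplay between the exponent $r$, the modulus $(q^2-1)/d$, the Frobenius $q$-th power (which acts as the identity on $S$ up to the known twist $\theta(x)^q\equiv\xi^{-(q-1)i}\theta(x)$), and the quotient $n=d/\gcd(q+1,d)$ — in particular verifying carefully that $\lambda$ is \emph{well-defined and surjective} onto exactly $U_n$ (not a larger or smaller group), that $y^{mr}=y^m$ holds on the image of $\lambda$ under the coprimality hypothesis, and that the constant twist $\xi^{-j(q^2-1)/n}$ either lies in $U_n$ or cancels so that $h$ genuinely maps $U_n$ to $U_n$. Once the diagram commutes and $\lambda$ is a surjection $S\twoheadrightarrow U_n$ with $\#U_n=\#U_n$, the AGW criterion closes the argument immediately, and the $A=0$ degenerate case is already disposed of in Theorem~\ref{thm_gene}.
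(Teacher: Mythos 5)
Your proposal is correct and follows essentially the same route as the paper: take $\lambda(x)=x^{(q^2-1)/d}$, show $\lambda(S)=\{0\}\cup U_n$ (the paper does this by writing $S=\{0\}\cup\{\xi^{(q+1)i-jd/n}\}$ and checking the exponents hit a complete residue system mod $n$, which matches your subgroup-size count plus the observation that the twist $\xi^{-j(q^2-1)/n}\in U_n$), verify $\lambda\circ g=h\circ\lambda$ in the two cases, and reduce fibre injectivity to $\gcd(r,(q^2-1)/d)=1$ since $g$ is $x\mapsto c_1x^r+c_2x$ with $x^{(q^2-1)/d}$ constant on each fibre. The only cosmetic slip is that commutativity needs merely $(y^r)^{(q^2-1)/d}=(y^{(q^2-1)/d})^r$, not the identity $y^{mr}=y^m$ you invoke; the coprimality of $r$ is used only for the fibre condition, exactly as in the paper.
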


\begin{proof}
Let $g(x)$ be defined by~\eqref{eq-g}, $\lambda(x)=x^{(q^2 -1)/d}$ and $T=\{\lambda(x) \mid x \in S\}$. Since all the conditions of Theorem~\ref{thm_gene} are satisfied, we need only prove that $g(x)$ permutes $S$ if and only if $h(x)$ permutes $T$. The proof is divided into four steps.

$(i)$ When $u^{q+1} =v^{q+1}$ or $u^{q+1}  \ne v^{q+1} $ and $r =1$, it is easy to show that
 $\lambda\circ g= h\circ\lambda$, i.e., the following diagram is commutative:
    \[
     \xymatrix{
      S\ar[d]_{\lambda}\ar[r]^{g}  & S\ar[d]^{\lambda}  \\
      T \ar[r]_{h}                      & T }
    \]

$(ii)$
We now prove $T= \{0\}\cup U_{n}$. By~\eqref{S}, if $b =\xi^{(q-1)jd/n}a^{q}$ then
$S = \{\,\xi^{-jd/n}e \mid e \in \F_{q}\}$.
Because $\F_{q} =\{0\}\cup\{\, \xi^{(q +1)i} \mid i= 1, 2,\dotsc, q-1 \}$, we have
\begin{equation}\label{S1}
S =\{0\} \cup \{\,\xi^{(q+1)i -jd/n} \mid i=1,2,\dotsc,q-1\}.
\end{equation}
Since $\gcd(q+1,\,d)=d/n$, there is $m \in \Z$ such that $q+1=md/n$ and $\gcd(m, n)=1$. Hence
\[
(q+1)i -jd/n=(mi-j)d/n
\]
and $\{mi-j \mid i= 1, 2,\dotsc, n\}$ is a complete residue system modulo $n$.
Because $d \mid q^2 -1$, namely $(n d/n) \mid (md/n)(q-1)$, we get $n \mid q-1$. Thus
\begin{equation}\label{S2}
S =\{0\}\cup D'_{0} \cup D'_{1}\cup \dotsm \cup D'_{n-1},
\end{equation}
where $D'_{i}$ is a set of $(q-1)/n$ elements from $D_{i}$, and
\[
D_{i} =\{ \xi^{kd +id/n}
\mid k=0,1,\dotsc, \tfrac{q^2 -1}{d}-1 \}, \quad 0 \le i \le n-1.
\]
For $x \in D_i$, we have $\lambda(x)=x^{(q^2-1)/d}=\omega^i$, where $\omega =\xi^{(q^2-1)/n}$. Hence
\begin{equation}\label{eq_Im_d/n}
\{\lambda(x) \mid x \in S\} =\{\,0,\omega^0, \omega^1, \omega^2, \dotsc, \omega^{n-1}\},
\quad \text{i.e.,} \quad T =\{0\} \cup U_{n}.
\end{equation}

$(iii)$ It follows from~\eqref{S2} and \eqref{eq_Im_d/n} that $\lambda^{-1}(\omega^i)=D'_{i}$.
We show next that $g(x)$ is injective on $D'_{i}$ when $\gcd(r,(q^2-1)/d)=1$.
For $x \in  D_{i}$, $\lambda(x)=\omega^i$ and so
\[ g(x)=\left\{
\begin{array}{ll}
    x^r[B\phi(\omega^i) +A^{1-r}B^{qr}\phi(\omega^i)^{q}] & \text{ for $u^{q+1} =v^{q+1}$,} \\
    x^r[B\phi(\omega^i) +B^{q}\phi(\omega^i)^{q} +u^{q+1}-v^{q+1}] & \text{ for \,$u^{q+1}  \ne v^{q+1}$, $r=1$}.
\end{array}\right.\]
It is easy to prove that the following statements are equivalent:
\begin{enumerate}
  \item $g(x)$ is injective on $D_{i}$.
  \item $x^r$ is injective on $D_i$.
  \item $\{rk \mid k=0,1,\dotsc, \tfrac{q^2 -1}{d}-1 \}$ is a complete residue system modulo $(q^2-1)/d$.
  \item $\gcd(r,(q^2-1)/d)=1$.
\end{enumerate}
Since $D'_i \subseteq D_i$, $g(x)$ is injective on $D'_{i}$ when $\gcd(r,(q^2-1)/d)=1$.

$(iv)$ By the AGW criterion, $g(x)$ permutes $S$ if and only if $h(x)$ permutes~$T$.
The later is equivalent to that $h(x)$ permutes $U_{n}$ because $T=\{0\} \cup U_{n}$ and $h(0)=0$.
$\hfill{}\Box$ \end{proof}

In the proof above, the problem of determining whether $g(x)$ permutes $S$ is reduced to that of verifying whether $h(x)$ permutes $U_{n}$. Hence Theorem~\ref{thm_UN} indicates that $f(x)$ in \eqref{fx} permutes $\F_{q^2}$  if and only if $h(x)$ permutes $U_{n}$. In this case we need only calculate at most $n$ values ($h(y)$ for $y\in U_{n}$) to verify whether $f(x)$ is a PP of $\F_{q^2}$. Therefore, roughly speaking, Theorem~\ref{thm_UN} is more efficient than Theorem~\ref{thm_gene} when $n<q$. In particular, if $n=1$ and $h(1)=1$ then $f(x)$ permutes~$\F_{q^2}$.

Let $d \mid q^2-1$. Then $d$ is an odd divisor of $q-1$ if and only if $\gcd(q+1,d)=1$.
From Lemma~\ref{lem_Fq}, $a^{q+1} = b^{q+1}$ if and only if $b=\xi^{(q-1)i}a^{q}$ for some $i\in\Z$.
Consequently we obtain the following result by applying Theorem~\ref{thm_UN} with $n=d$.

\begin{cor}\label{cor-odd}
Let $d \geq 3$ be an odd divisor of\, $q-1$. Let $a,b,c \in \F_{q^2}$ satisfy $ab \ne 0$, $a^{q+1} =b^{q+1}$ and $ac^{q} =b^{q}c$. Let $r,u,v$ be as in Theorem~\ref{thm_UN}. Then $f(x)$ in~\eqref{fx} is a PP of $\F_{q^2}$  if and only if\, $h(x)$ in~\eqref{h} permutes $U_{d}$.
\end{cor}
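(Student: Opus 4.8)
The plan is to obtain Corollary~\ref{cor-odd} as the special case $n=d$ of Theorem~\ref{thm_UN}; the entire argument is then a verification that the hypotheses of the corollary coincide with those of the theorem when $n=d$.

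First I would record the number-theoretic fact that $\gcd(q+1,d)=1$. Since $d\mid q-1$ we have $q\equiv 1\pmod d$, hence $q+1\equiv 2\pmod d$, so $\gcd(q+1,d)=\gcd(2,d)$; because $d$ is odd this equals $1$. Therefore the integer $n$ in Theorem~\ref{thm_UN}, defined by $\gcd(q+1,d)=d/n$, satisfies $n=d$, and the side conditions $n\ge 1$, $n\mid d$ hold trivially. In particular $U_n=U_d$, and the polynomial $h(x)$ of Theorem~\ref{thm_UN} is exactly the one in~\eqref{h} appearing in the corollary.

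Next I would match the condition on $b$. With $n=d$ we have $d/n=1$, so the requirement $b=\xi^{(q-1)j\mspace{1.5mu}d/n}a^{q}$ with $1\le j\mspace{1.5mu}d/n\le q+1$ of Theorem~\ref{thm_UN} reads $b=\xi^{(q-1)j}a^{q}$ with $1\le j\le q+1$; by Lemma~\ref{lem_Fq}~$(ii)$ this is equivalent (given $ab\ne 0$) to $a^{q+1}=b^{q+1}$, which is the form stated in the corollary. The remaining hypotheses---$ab\ne 0$, $ac^{q}=b^{q}c$, and the alternative ``$u^{q+1}=v^{q+1}$ with $\gcd(r,(q^{2}-1)/d)=1$, or $u^{q+1}\ne v^{q+1}$ with $r=1$''---appear verbatim in both statements. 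Hence Theorem~\ref{thm_UN} applies with $n=d$ and yields: $f(x)$ in~\eqref{fx} permutes $\F_{q^2}$ if and only if $h(x)$ permutes $U_d$. This is the claim.

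I do not anticipate any real difficulty: the proof is essentially bookkeeping, using Theorem~\ref{thm_UN} as a black box. The two places that each need one line are the congruence computation giving $\gcd(q+1,d)=1$ (so that $n=d$), and the appeal to Lemma~\ref{lem_Fq}~$(ii)$ translating between the parametric description of $b$ used in Theorem~\ref{thm_UN} and the symmetric norm condition $a^{q+1}=b^{q+1}$ used in the corollary.
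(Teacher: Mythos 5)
Your proposal is correct and matches the paper's own derivation: the authors likewise obtain Corollary~\ref{cor-odd} by applying Theorem~\ref{thm_UN} with $n=d$, after noting that an odd divisor $d$ of $q-1$ forces $\gcd(q+1,d)=1$ and invoking Lemma~\ref{lem_Fq}~$(ii)$ to rewrite the condition on $b$ as $a^{q+1}=b^{q+1}$.
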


We next consider the case that $d$ is an even divisor of $q-1$.

\begin{cor}\label{thm-even}
Let $d \geq 2$ be an even divisor of $q-1$. Let $a,b,c \in \F_{q^2}$, $j\in\Z$ satisfy $ab\ne 0$, $b =\xi^{(q-1)j}a^{q}$ and $ac^{q} =b^{q}c$. Let $r,u,v$ be as in Theorem~\ref{thm_UN}. Then $f(x)$ in~\eqref{fx} is a PP of $\F_{q^2}$  if and only if
\begin{enumerate}[$(i)$]
  \item $h(x)$ in~\eqref{h} permutes $U_{d/2}$ when $j$ is even, or
  \item $h(x)$ in~\eqref{h} permutes $U_{d}\setminus U_{d/2}$ when $j$ is odd.
\end{enumerate}
\end{cor}
\begin{proof}
Since $d \mid q-1$ and $d$ is even, $q+1 \equiv 2 \pmod{d}$ and $\gcd(q+1,\,d)=\gcd(2,d)=2$.
For even $j$, the conditions in Theorem~\ref{thm_UN} are satisfied.
Hence $n=d/2$ and $U_{n} =U_{d/2}$, and so $(i)$ is true.
For odd $j$, an argument similar to that leading to~\eqref{eq_Im_d/n} shows
\[
\{x^{(q^2-1)/d} \mid x \in S\}
=\{\,0, \omega, \omega^3, \omega^5,\dotsc, \omega^{d-1}\}=\{0\}\cup U_{d}\setminus U_{d/2},
\]
where $\omega =\xi^{(q^2-1)/d}$. The remaining proof is analogous to that in Theorem~\ref{thm_UN}.
$\hfill{}\Box$ \end{proof}

When $d \mid q-1$, Corollaries~\ref{cor-odd} and \ref{thm-even} give necessary and sufficient conditions for $f(x)$ to be a PP of $\F_{q^2}$. In Corollary~\ref{thm-even}, let $d=6$, $q=13$ and $a=1$. If $j=q+1=14$ then $b=1$. If $j=(q+1)/2=7$ then $b=-1$. Thus we obtain the following example.
\begin{exam}
Let $\phi(x) \in\F_{169}[x]$, $r \ge 1$ and $\gcd(r,28)=1$. Define
\[
f_1(x)=(x^{13}+x)^r\phi((x^{13}+x)^{28}) +x^{13}-x,
\]
\[
f_2(x)=(x^{13}-x)^r\phi((x^{13}-x)^{28}) +x^{13}+x,
\]
\[
h(x)=2x^r(\phi(x)+\phi(x)^{13})^{28}.
\]
Then $f_1(x)$ is a PP of $\F_{169}$ if and only if $h(x)$ permutes $U_{3}=\{1,3,9\}$,
and $f_2(x)$ is a PP of $\F_{169}$ if and only if $h(x)$ permutes $U_{6}\setminus U_{3}=\{4,12,10\}$.
\end{exam}

We conclude this section with a remark on the degree of $\phi(x)$. Because
\[
x^r(x^{(q^2-1)/d})^d \equiv x^r \pmod{x^{q^2}-x},
\]
we need only consider $\deg(\phi(x))<d$ and $1 \le r \le (q^2-1)/d$ in Theorem~\ref{thm_gene}.
By~\eqref{eq_Im_d/n},
\begin{equation}\label{abc-un}
\{(ax^{q} +bx +c)^{(q^2-1)/d} \mid x \in \F_{q^2}\}=\{0\}\cup U_n.
\end{equation}
Hence we need only consider $\deg(\phi(x))<n$ in Theorem~\ref{thm_UN} and Corollary~\ref{cor-odd}.

\section{$\phi(x)$ is a constant}

In Sections 2 and 3, the polynomial $\phi(x)$ is arbitrary. In order to find simple conditions for $f(x)$ to be a PP, special forms of $\phi(x)$ are considered from Section~4 to~7.
We first investigate the case that $\phi(x)$ is a constant.
\subsection{The case $\phi(x)=1$}\label{sub-h=1}
When $\phi(x)=1$, we deduce the next result from Theorem~\ref{thm_gene}.
\begin{thm}\label{thm-r}
Let $a,b,c,u,v\in \F_{q^2}$  satisfy $ab \ne 0$, $a^{q+1}  =b^{q+1}$  and $ac^{q}  =b^{q}c$. Define
\begin{equation*}\label{eq-r}
\begin{split}
  f(x) & =(ax^{q} +bx +c)^r + ux^{q} +vx, \\
  g(x) & =(B +A^{1-r}B^{qr})x^r +(u^{q+1}-v^{q+1})x,
\end{split}
\end{equation*}
where $r\in\Z^{+}$. Then $f(x)$ is a PP of $\F_{q^2}$ if and only if $g(x)$ permutes $S$.
In particular, if $B +A^{1-r}B^{qr}=0$ then $f(x)$ permutes $\F_{q^2}$ if and only if $u^{q+1} \ne v^{q+1}$.
If $u^{q+1}  = v^{q+1}$ then $f(x)$ permutes $\F_{q^2}$ if and only if $B +A^{1-r}B^{qr} \ne 0$ and $\gcd(r,q-1)=1$.
\end{thm}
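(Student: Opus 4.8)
First I would read off the first assertion directly from Theorem~\ref{thm_gene}. Specializing $\phi(x)\equiv 1$ in~\eqref{fx} (with any admissible divisor $d\mid q^2-1$, since a constant $\phi$ is insensitive to $d$), the expression in~\eqref{eq-g} collapses to $g(x)=(B+A^{1-r}B^{qr})x^r+(u^{q+1}-v^{q+1})x$, and Theorem~\ref{thm_gene} gives at once that $f$ permutes $\F_{q^2}$ if and only if $g$ permutes $S$. Write $K=B+A^{1-r}B^{qr}$ and $c_0=u^{q+1}-v^{q+1}$; since $(u^{q+1})^q=u^{q+1}$ and likewise for $v$, we have $c_0\in\F_q$. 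I would first clear away the case $A=0$: as recorded in the proof of Theorem~\ref{thm_gene}, $A=0$ forces $B=0$ and $u^{q+1}=v^{q+1}$, hence $K=0$ and $c_0=0$, so $g$ is identically zero and does not permute the $q$-element set $S$, while $f$ is not a PP; in this situation both displayed equivalences hold since their right-hand sides are false.

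So assume $A\ne 0$. By step $(ii)$ of the proof of Theorem~\ref{thm_gene}, $S=\bar S=\{\xi^{-i}e\mid e\in\F_q\}$, where $i$ is the integer with $b=\xi^{(q-1)i}a^q$, and the commutative diagram shows $g(S)\subseteq\bar S=S$; hence $g$ permutes $S$ if and only if $g$ is injective on $S$. If $K=0$, then $g(x)=c_0x$ is $\F_q$-linear and carries $S=\xi^{-i}\F_q$ into itself; it is injective on $S$ exactly when $c_0\ne 0$ (if $c_0=0$ it is constant on the $q\ge 2$ points of $S$, and if $c_0\ne 0$ then $x\mapsto c_0x$ even permutes all of $\F_{q^2}$), which gives the first ``in particular'' clause. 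If instead $c_0=0$, then $g(x)=Kx^r$, and for $e_1,e_2\in\F_q$ we have $g(\xi^{-i}e_1)=g(\xi^{-i}e_2)$ iff $K\xi^{-ir}(e_1^r-e_2^r)=0$; thus $g$ is injective on $S$ iff $K\ne 0$ and $e\mapsto e^r$ is injective on $\F_q$, the latter being the classical fact that $x^r$ permutes $\F_q$ iff $\gcd(r,q-1)=1$. This gives the second ``in particular'' clause.

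I expect the only point needing a little care to be the interpretation of $x^r$ for $r\le 0$: one adopts the standard convention $0^r=0$ so that $x^r$ is a well-defined self-map of $\F_q$ (and of $S$), and then ``$x^r$ permutes $\F_q$ iff $\gcd(r,q-1)=1$'' still holds via the induced permutation of $\F_q^{*}$. Apart from that bookkeeping, the whole argument is a direct unwinding of Theorem~\ref{thm_gene} together with the description of $S$ supplied by Lemma~\ref{lem_Fq}, so I would not anticipate any serious obstacle.
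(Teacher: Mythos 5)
Your proposal is correct and follows essentially the same route as the paper: specialize Theorem~\ref{thm_gene} to constant $\phi$, observe that $g$ maps $S$ into $S$ so permuting $S$ reduces to injectivity on $S$, and then treat the two cases $B+A^{1-r}B^{qr}=0$ (where $g$ is the linear map $(u^{q+1}-v^{q+1})x$) and $u^{q+1}=v^{q+1}$ (where injectivity of $Kx^r$ on the coset $\xi^{-i}\F_q$ comes down to $K\ne 0$ and $\gcd(r,q-1)=1$). Your explicit handling of $A=0$ and of the convention for $x^r$ with $r\le 0$ is slightly more careful than the paper's but changes nothing of substance.
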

\begin{proof}
The first part is a direct consequence of Theorem~\ref{thm_gene}. Note that $g(x)$ permutes $S$ if and only if $g(x)$ is injective on $S$. If $B +A^{1-r}B^{qr}=0$ then $g(x)=(u^{q+1}  -v^{q+1}) x$.
Thus $g(x)$ is injective on $S$ if and only if $u^{q+1} \ne v^{q+1}$.
If $u^{q+1}  = v^{q+1}$ then $g(x)=(B +A^{1-r}B^{qr}) x^r$. Note that
\[
S=\{\xi^{-i}e \mid e \in \F_{q}\} \text{\quad and \quad}
\F_{q}=\{0\}\cup\{ \xi^{(q +1)j} \mid j= 1, 2,\dotsc, q-1 \}.
\]
Hence $g(x)$ is injective on $S$ if and only if $B +A^{1-r}B^{qr} \ne 0$ and $\{rj \mid j= 1,2,\dotsc,q-1\}$
is a complete residue system modulo $q-1$, i.e., $\gcd(r, q-1) =1$.
$\hfill{}\Box$ \end{proof}

Theorem~\ref{thm-r} unifies and generalizes several recently discovered PPs. Applying Theorem~\ref{thm-r} to $a=b=1$, we arrive at the following examples.

\begin{cor}\label{cor-4uxr}
Let $q$ be an odd prime power, $r\in\Z^{+}$ and $c, u \in \F_{q}$. Then
\[ f(x)=(x^{q} +x +c)^{r} +ux^{q} -ux \]
is a PP of $\F_{q^2}$  if and only if\, $u \ne 0$ and $\gcd(r, q-1) =1$.
\end{cor}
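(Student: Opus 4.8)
The plan is to obtain Corollary~\ref{cor-4uxr} as a direct specialization of Theorem~\ref{thm-r}, taking $a=b=1$ and $v=-u$, with $c,u\in\F_q$. First I would check that the hypotheses of Theorem~\ref{thm-r} are met: $ab=1\ne0$; $a^{q+1}=b^{q+1}=1$; and $ac^{q}=c^{q}=c=b^{q}c$ since $c\in\F_q$. Hence the theorem applies, and $f(x)$ is a PP of $\F_{q^2}$ if and only if the associated $g(x)$ permutes $S$, with the two sub-cases recorded there available for use.

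Next I would compute the constants $A$, $B$ and compare $u^{q+1}$ with $v^{q+1}$. Since $u,v=-u\in\F_q$, we get $A=bu-av=u-(-u)=2u$ and $B=au^{q}-bv^{q}=u-(-u)=2u$, so $A=B=2u$. Moreover $u^{q+1}=u^{2}$ and $v^{q+1}=(-u)^{2}=u^{2}$, so $u^{q+1}=v^{q+1}$ holds unconditionally; this puts us in the ``in particular'' branch of Theorem~\ref{thm-r} where the criterion is ``$B+A^{1-r}B^{qr}\ne0$ and $\gcd(r,q-1)=1$''. Because $2u\in\F_q$ we have $(2u)^{q}=2u$, hence $B+A^{1-r}B^{qr}=2u+(2u)^{1-r}(2u)^{r}=2u+2u=4u$. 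As $q$ is odd, $4u=0$ precisely when $u=0$.

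Finally I would assemble the conclusion by cases. If $u\ne0$, then $A=B=2u\ne0$ and $B+A^{1-r}B^{qr}=4u\ne0$, so Theorem~\ref{thm-r} gives that $f(x)$ is a PP of $\F_{q^2}$ if and only if $\gcd(r,q-1)=1$. If $u=0$, then $B+A^{1-r}B^{qr}=4u=0$ while $u^{q+1}=v^{q+1}$, so Theorem~\ref{thm-r} shows $f(x)$ is not a PP; equivalently one may argue directly that $f(x)=(x^{q}+x+c)^{r}$ cannot be a PP, since $x^{q}+x+c$ is not a PP of $\F_{q^2}$ by Lemma~\ref{lem_Fq}$(i)$ (as $1^{q+1}=1^{q+1}$), hence has image of size less than $q^{2}$, so no power of it is surjective. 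Combining the two cases yields exactly the claimed equivalence: $f(x)$ is a PP of $\F_{q^2}$ if and only if $u\ne0$ and $\gcd(r,q-1)=1$.

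I do not expect a genuine obstacle here, since the argument is a substitution into Theorem~\ref{thm-r} followed by an elementary computation. The only point requiring a little care is the degenerate case $u=0$: there $A=0$ and the expression $A^{1-r}$ is ill-defined for $r>1$, so rather than plugging into the formula for $B+A^{1-r}B^{qr}$ one should dispose of this case separately, as indicated above.
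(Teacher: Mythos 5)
Your proposal is correct and follows exactly the route the paper intends: the corollary is stated as a direct specialization of Theorem~\ref{thm-r} with $a=b=1$, $v=-u$, landing in the $u^{q+1}=v^{q+1}$ branch where $B+A^{1-r}B^{qr}=4u$. Your separate treatment of the degenerate case $u=0$ (where $A=0$ makes $A^{1-r}$ ill-defined) is a careful touch that the paper leaves implicit but that is consistent with step $(iv)$ of the proof of Theorem~\ref{thm_gene}.
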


\begin{cor}\label{cor-a=b=1}
Let $r\in\Z^{+}$, $c, u \in \F_{q}$ and $e \in \F_{q^2}$ with $e+e^q=0$. Then
\begin{equation*}\label{eq-r1}
  f(x)=(x^{q} +x +c)^r + ux^{q} +(u-e)x
\end{equation*}
is a PP of $\F_{q^2}$  if and only if\, $e \ne 0$.
\end{cor}

Corollary \ref{cor-a=b=1} generalizes \cite[Theorem 5]{ZH12}, which only gives the sufficient part.

\begin{cor}
Let $m,r\in\Z^{+}$, $q=2^m$ and $c, u, e \in \F_{q}$ with $e \ne 0$ or $1$. Let
\[
  f(x)=(x^{q} +x +c)^r + ux^{q} +(u+e)x.
\]
Then both $f(x)$ and $f(x)+x$ are PPs of $\F_{q^2}$.
\end{cor}

A polynomial $f(x)$ is called a complete permutation polynomial (CPP) if both $f(x)$ and $f(x)+x$ are PPs. CPPs are useful in the construction of orthogonal latin squares~\cite{cmp}, check digit systems~\cite{SW10} and bent-negabent functions~\cite{SGCGS12}. If we change $v$ to $v+1$ in some results of this paper, we can obtain more CPPs.

Applying Theorem~\ref{thm-r} to $a=1$ and $b=-1$, we obtain the next corollaries.
\begin{cor}\label{cor-b=-1}
Let $r\in\Z^{+}$, $c, u, v \in \F_{q^2}$ satisfy $c+c^q=0$ and $(u+v)^q =(-1)^{r}(u+v)$. Then
\begin{equation*}\label{eq-b=-1}
  f(x)=(x^{q} -x +c)^r + ux^{q} +vx
\end{equation*}
is a PP of $\F_{q^2}$ if and only if\, $u^{q+1}  \ne v^{q+1}$.
\end{cor}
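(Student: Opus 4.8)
The plan is to derive this corollary as the special case $a=1$, $b=-1$ of Theorem~\ref{thm-r}. First I would check that the hypotheses of Theorem~\ref{thm-r} are met: clearly $ab=-1\ne 0$, and $a^{q+1}=1=(-1)^{q+1}=b^{q+1}$ (the exponent $q+1$ is even when $q$ is odd, and $-1=1$ when $q$ is even). The remaining condition $ac^{q}=b^{q}c$ reads $c^{q}=(-1)^{q}c$, which is exactly the hypothesis $c+c^{q}=0$ after separating the odd and even characteristic cases. So Theorem~\ref{thm-r} applies and reduces the problem to whether $g(x)=(B+A^{1-r}B^{qr})x^{r}+(u^{q+1}-v^{q+1})x$ permutes $S$.

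Next I would compute the relevant constants. With $a=1$, $b=-1$ we get $A=bu-av=-(u+v)$ and $B=au^{q}-bv^{q}=u^{q}+v^{q}=(u+v)^{q}$ by additivity of the Frobenius. Writing $w=u+v\in\F_{q^2}$, so $A=-w$ and $B=w^{q}$, and using $w^{q^{2}}=w$, I would simplify
\[
A^{1-r}B^{qr}=(-w)^{1-r}(w^{q})^{qr}=(-1)^{1-r}w^{1-r}w^{q^{2}r}=(-1)^{1-r}w^{1-r+r}=(-1)^{1-r}w .
\]
Hence $B+A^{1-r}B^{qr}=w^{q}+(-1)^{1-r}w=(u+v)^{q}+(-1)^{1-r}(u+v)$, which vanishes precisely by the second hypothesis on $u,v$.

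Finally, since $B+A^{1-r}B^{qr}=0$, the ``in particular'' clause of Theorem~\ref{thm-r} gives at once that $f(x)$ permutes $\F_{q^2}$ if and only if $u^{q+1}\ne v^{q+1}$, which is the assertion. The only points needing care are the bookkeeping of the signs $(-1)^{q}$ and $(-1)^{q+1}$ in even versus odd characteristic, and the degenerate case $w=u+v=0$: then $A=0$, so the symbol $A^{1-r}$ must be read with the convention used in the proof of Theorem~\ref{thm_gene}, but since $u=-v$ forces $u^{q+1}=v^{q+1}$ while step~$(iv)$ of the proof of Theorem~\ref{thm_gene} (the case $A=0$) shows $f$ is then not a PP, the stated equivalence still holds. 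I do not expect any genuine obstacle; the whole argument is a short specialization followed by a one-line simplification.
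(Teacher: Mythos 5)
Your proposal is correct and follows exactly the route the paper intends: the paper states this corollary as an immediate specialization of Theorem~\ref{thm-r} to $a=1$, $b=-1$, and your computation $A=-(u+v)$, $B=(u+v)^q$, $A^{1-r}B^{qr}=(-1)^{1-r}(u+v)$ verifies that the hypothesis $(u+v)^q+(-1)^{1-r}(u+v)=0$ is precisely the condition $B+A^{1-r}B^{qr}=0$ of the ``in particular'' clause. Your extra care with the degenerate case $u+v=0$ (where $A=0$ and $A^{1-r}$ is formally problematic) is a point the paper glosses over, and you resolve it correctly via step $(iv)$ of the proof of Theorem~\ref{thm_gene}.
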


Corollary~\ref{cor-b=-1} generalizes \cite[Theorem 4]{ZH12}, which considers special $u$ and $v$ and only gives the sufficient part.

\begin{cor}\label{cor2-b=-1}
Let $r\in\Z^{+}$, $c, u, v \in \F_{q^2}$ satisfy $c+c^q=0$ and $u^{q+1} = v^{q+1}$. Then
\begin{equation*}\label{eq2-b=-1}
  f(x)=(x^{q} -x +c)^r + ux^{q} +vx
\end{equation*}
is a PP of $\F_{q^2}$ if and only if\, $\gcd(r,q-1)=1$ and $(u+v)^q =(-1)^{r}(u+v)$.
\end{cor}

Corollaries~\ref{cor-b=-1} and \ref{cor2-b=-1} unify and generalize \cite[Theorem 5.12]{AGW} and \cite[Theorem~6.4]{YD-AGW}, which requires $c=0$ and $u=v$ or $v^q$. Moreover, the first statement of Theorem~8.1.71 in the Handbook of Finite Fields \cite{HFF} is incorrect since the condition is sufficient but not necessary. Using the fact that $v-u^q \in \F_{q}$ and $u^{q+1}-v^{q+1}=(u+v)(u^q-v)$, we can show that the sufficient part is covered by Corollary~\ref{cor-b=-1}. For $v\in \F_4 \setminus \F_2$, the PP $(x^2+x)+x^2+vx$ of $\F_4$ is a counterexample of the necessary situation.

\subsection{The case $d \mid q+1$}

When $d \mid q+1$, for any $\phi(x)\in \F_{q^2}[x]$ we infer the next result which is similar to Theorem~\ref{thm-r}.
\begin{thm}\label{thm-d-q+1}
Under the hypotheses of Theorem~\ref{thm_gene} with $d \mid q^2-1$ replaced by $d \mid q+1$,
the polynomial $f(x)$ in~\eqref{fx} satisfies the congruence
\[
f(x) \equiv \phi(\delta)(ax^{q} +bx +c)^r +ux^{q} +vx \pmod{x^{q^2} -x},
\]
where $\delta=(b^q/a)^{(q+1)/d}$. If $\phi(\delta)\ne 0$ then $f(x)$ is a PP of $\F_{q^2}$ if and only if
\[
g(x)=(\bar{B} +\bar{A}^{1-r}\bar{B}^{qr})x^r +(\bar{u}^{q+1}-\bar{v}^{q+1})x
\]
permutes $S$, where $\bar{A} =b\bar{u} -a\bar{v}$,
$\bar{B} =a\bar{u}^{q} -b\bar{v}^{q}$, $\bar{u} =u/\phi(\delta)$ and $\bar{v} =v/\phi(\delta)$.
\end{thm}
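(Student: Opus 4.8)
The plan is to split the argument into two parts that mirror the two displayed assertions. First I would establish the congruence
$f(x)\equiv\phi(\delta)\,\theta(x)^r+ux^q+vx\pmod{x^{q^2}-x}$ (writing $\theta(x)=ax^q+bx+c$ as in the proof of Theorem~\ref{thm_gene}); this is the only place the hypothesis $d\mid q+1$ is used, and its effect is simply that $\theta(x)^{(q^2-1)/d}$ behaves like the constant $\delta$, so that $\phi(\theta(x)^{(q^2-1)/d})$ collapses to $\phi(\delta)$. Then, assuming $\phi(\delta)\neq 0$, I would factor out the constant $\phi(\delta)$, use that scaling a map of $\F_{q^2}$ by a nonzero constant does not change whether it is a permutation, and invoke Theorem~\ref{thm-r} directly.

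For the congruence: by \eqref{eq_image} we have $a\theta(x)^q\equiv b^q\theta(x)$, that is, $\theta(x)^q\equiv(b^q/a)\theta(x)\pmod{x^{q^2}-x}$, and a straightforward induction gives $\theta(x)^{(q-1)k+1}\equiv(b^q/a)^k\theta(x)$ for all $k\ge 0$. Since $d\mid q+1$, the exponent $m=(q^2-1)/d$ equals $(q-1)\tfrac{q+1}{d}$, so $jm+1=(q-1)\tfrac{j(q+1)}{d}+1$ and therefore $\theta(x)^{jm+1}\equiv(b^q/a)^{j(q+1)/d}\theta(x)=\delta^{j}\theta(x)$, where $\delta=(b^q/a)^{(q+1)/d}$. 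Writing $\phi(y)=\sum_j c_jy^j$, for $r\ge 1$ each term of $\theta(x)^r\phi(\theta(x)^m)=\sum_j c_j\theta(x)^{r+jm}$ reduces via $\theta(x)^{r+jm}=\theta(x)^{r-1}\theta(x)^{jm+1}\equiv\delta^{j}\theta(x)^r$ when $j\ge 1$, while the $j=0$ term is $c_0\theta(x)^r$; summing yields $\theta(x)^r\phi(\theta(x)^m)\equiv\big(\sum_j c_j\delta^{j}\big)\theta(x)^r=\phi(\delta)\theta(x)^r$, and adding $ux^q+vx$ gives the claimed congruence.

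For the second part, set $\bar u=u/\phi(\delta)$ and $\bar v=v/\phi(\delta)$, so the congruence reads $f(x)\equiv\phi(\delta)\big((ax^q+bx+c)^r+\bar u x^q+\bar v x\big)$. As $\phi(\delta)\neq 0$, $f$ permutes $\F_{q^2}$ if and only if $\tilde f(x)=(ax^q+bx+c)^r+\bar u x^q+\bar v x$ does. The hypotheses $ab\neq 0$, $a^{q+1}=b^{q+1}$, $ac^q=b^qc$ are precisely those of Theorem~\ref{thm-r} and involve neither $u$ nor $v$, so that theorem applies to $\tilde f$ with $u,v$ replaced by $\bar u,\bar v$: $\tilde f$ is a PP of $\F_{q^2}$ if and only if $g(x)=(\bar B+\bar A^{1-r}\bar B^{qr})x^r+(\bar u^{q+1}-\bar v^{q+1})x$ permutes $S$, with $\bar A=b\bar u-a\bar v$, $\bar B=a\bar u^q-b\bar v^q$ (and $S=\{ax^q+bx+c\mid x\in\F_{q^2}\}$ is unchanged, depending only on $a,b,c$). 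This is the statement.

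The only delicate point is the exponent bookkeeping in the congruence together with the behaviour at those $\alpha$ with $\theta(\alpha)=0$: there $\theta(\alpha)^m=0$ and, under the standing convention $0^r=0$ for $r\neq 0$, both sides equal $u\alpha^q+v\alpha$, so the identity $\theta(x)^{r+jm}\equiv\delta^{j}\theta(x)^r$ persists; the factorization $\theta(x)^{r+jm}=\theta(x)^{r-1}\theta(x)^{jm+1}$ is literally valid for $r\ge 1$, and the general case follows from the same computation after reducing $r$ modulo $q^2-1$. Beyond this routine check no genuine obstacle arises, the theorem being in essence a corollary of Theorem~\ref{thm-r} once the hypothesis $d\mid q+1$ is seen to trivialize $\phi$.
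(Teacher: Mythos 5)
Your proposal is correct and follows essentially the same route as the paper: reduce $\phi(\theta(x)^{(q^2-1)/d})$ to the constant $\phi(\delta)$ using $d\mid q+1$ and the relation $\theta(x)^q\equiv(b^q/a)\theta(x)$, then factor out $\phi(\delta)$ and invoke Theorem~\ref{thm-r} with $\bar u,\bar v$ in place of $u,v$. The only difference is cosmetic: the paper derives the congruence by pointwise evaluation (using $\theta(e)^{q-1}=b^q/a$ when $\theta(e)\ne 0$), whereas you argue at the level of polynomial congruences by induction, which is if anything slightly cleaner about the $\theta(e)=0$ case.
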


\begin{proof}
Let $\theta(x)= ax^{q} +bx +c$. Then $f(x)= \theta(x)^r \phi(\theta(x)^{(q^2-1)/d}) +ux^{q} +vx$.
By~\eqref{eq_image},
\[ \theta(x)^{q} \equiv (b^q/a)\theta(x) \pmod{x^{q^2} -x}.\]
For any $e \in \F_{q^2}$, if $\theta(e) \ne 0$ then $\theta(e)^{q-1} = b^q/a$. Hence
\[
 \theta(e)^r \phi(\theta(e)^{(q^2-1)/d})
=\theta(e)^r \phi(\theta(e)^{(q-1)(q+1)/d})
=\theta(e)^r \phi((b^q/a)^{(q+1)/d}),\]
\[
\theta(x)^r \phi(\theta(x)^{(q^2-1)/d})
\equiv \theta(x)^r \phi((b^q/a)^{(q+1)/d}) \pmod{x^{q^2} -x}.
\]
Denote $\delta =(b^q/a)^{(q+1)/d}$. Then $f(x) \equiv \theta(x)^r\phi(\delta) +ux^{q} +vx \pmod{x^{q^2} -x}$.

The second part of the theorem is a direct consequence of Theorem~\ref{thm-r}.
$\hfill{}\Box$ \end{proof}

Applying Theorem~\ref{thm-d-q+1} to $r=1$, we obtain the following result.
\begin{cor}\label{d-q+1}
Under the hypotheses of Theorem~\ref{thm_gene} with $d \mid q^2-1$ and $r\in\Z^{+}$ replaced by $d \mid q+1$ and $r=1$, the polynomial $f(x)$ in~\eqref{fx} satisfies the congruence
\[
f(x) \equiv \alpha x^{q} +\beta x +\gamma \pmod{x^{q^2} -x},
\]
where $\alpha =\phi(\delta)a +u$, $\beta =\phi(\delta)b +v$, $\gamma =\phi(\delta)c$ and $\delta=(b^q/a)^{(q+1)/d}$.
Moreover, $f(x)$ is a PP of $\F_{q^2}$ if and only if\, $\alpha^{q+1} \ne \beta^{q+1}$.
\end{cor}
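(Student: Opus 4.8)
The plan is to specialize Theorem~\ref{thm-d-q+1} to the case $r=1$ and then analyze the resulting linearized polynomial directly. First I would invoke Theorem~\ref{thm-d-q+1}: since $d \mid q+1$, $ab \ne 0$, $a^{q+1}=b^{q+1}$ and $ac^{q}=b^{q}c$, that theorem yields
\[
f(x) \equiv \phi(\delta)(ax^{q}+bx+c) +ux^{q}+vx \pmod{x^{q^2}-x},
\]
with $\delta=(b^q/a)^{(q+1)/d}$. Expanding the right-hand side and collecting the coefficients of $x^q$, of $x$, and the constant term gives exactly $\alpha x^{q}+\beta x+\gamma$ with $\alpha=\phi(\delta)a+u$, $\beta=\phi(\delta)b+v$ and $\gamma=\phi(\delta)c$ (note that no hypothesis $\phi(\delta)\ne0$ is needed here; if $\phi(\delta)=0$ one simply gets $\alpha=u$, $\beta=v$, $\gamma=0$). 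This establishes the claimed congruence, so $f$ permutes $\F_{q^2}$ if and only if the map $x\mapsto \alpha x^{q}+\beta x+\gamma$ does.

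Next I would remove the constant term: adding $\gamma$ is a bijection of $\F_{q^2}$, so $\alpha x^{q}+\beta x+\gamma$ permutes $\F_{q^2}$ if and only if the linearized polynomial $L(x)=\alpha x^{q}+\beta x$ does. Here $L$ is an additive, hence $\F_q$-linear, map of the two-dimensional $\F_q$-vector space $\F_{q^2}$, and, exactly as in the proof of Lemma~\ref{lem_Fq}~$(i)$, it is a bijection if and only if
\[
\begin{vmatrix} \alpha & \beta^{q} \\ \beta & \alpha^{q} \end{vmatrix}
= \alpha^{q+1}-\beta^{q+1}
\]
is nonzero. Chaining the two equivalences, $f$ is a PP of $\F_{q^2}$ if and only if $\alpha^{q+1}\ne\beta^{q+1}$, which is the assertion.

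The only point that calls for a little care is that Lemma~\ref{lem_Fq}~$(i)$ is stated under the hypothesis $ab\ne0$, whereas here there is no control over whether $\alpha$ or $\beta$ vanishes. I would therefore either appeal to the determinant criterion for linearized binomials in the displayed form above (which is valid with no restriction on the coefficients), or dispose of the degenerate cases by hand: if $\alpha=0$ then $L(x)=\beta x$ is a bijection iff $\beta\ne0$, i.e.\ iff $\beta^{q+1}\ne0=\alpha^{q+1}$, and symmetrically if $\beta=0$; in the remaining case $\alpha\beta\ne0$ Lemma~\ref{lem_Fq}~$(i)$ applies verbatim to $\alpha x^{q}+\beta x+\gamma$. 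Since everything else is a one-line substitution into Theorem~\ref{thm-d-q+1}, this bookkeeping around vanishing coefficients is really the only obstacle, and it is minor.
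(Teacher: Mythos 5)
Your proof is correct and follows the same route as the paper: specialize Theorem~\ref{thm-d-q+1} to $r=1$ to get the congruence, then apply the determinant criterion for the linearized polynomial $\alpha x^{q}+\beta x$ (as in Lemma~\ref{lem_Fq}~$(i)$). Your extra care about the cases $\phi(\delta)=0$ and $\alpha\beta=0$, which the paper glosses over, is a welcome but minor addition.
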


Corollary~\ref{d-q+1} reduces $f(x)$ in~\eqref{fx} to a simple linearized polynomial. Theorem~8 in~\cite{LHT13} and some theorems in~\cite{YZ15} are special cases of Corollary~\ref{d-q+1}.

\begin{exam}\label{Li8}
Let $q$ be an odd prime power with $3 \mid q+1$, and let $c\in \F_{q^2}$ with $c+c^q=0$.
Theorem 8 in~\cite{LHT13} pointed out that
\[ f(x)=(x^q -x +c)^{\frac{q^2-1}{3}+1} +x\] permutes $\F_{q^2}$.
In fact, by Corollary~$\ref{d-q+1}$, we have
\[ f(x) \equiv \alpha x^q +(1-\alpha)x +\alpha c \pmod{x^{q^2} -x}, \]
where $\alpha=(-1)^{(q+1)/3}$. Since $\alpha^{q+1} \ne (1-\alpha)^{q+1}$, it follows that $f(x)$ is a PP of $\F_{q^2}$.
\end{exam}

\section{$\phi(x)$ has degree less than three}\label{sec_n=2}

According to Theorem \ref{thm_UN}, the permutation behavior of $f(x)$ in \eqref{fx} over $\F_{q^2}$ is easy to check when $n$ is small. For $n=2$ or $3$, it suffices to consider $\deg(\phi(x))<2$ or $3$. In this case, we derive  explicit classes of PPs of $\F_{q^2}$. These results unify and generalize several classes of PPs in~\cite{LHT13,YZ15}.

\subsection{The case $n=2$}

\begin{lem}\label{lem-n=2}
Let $d \ge 4$ be even. Then the following two conditions are equivalent:
\begin{enumerate}[$(i)$]
  \item $d \mid q^2-1$, $\gcd(q+1, \,d)=d/2$;
  \item $d \equiv 0 \pmod{4}$, $q + 1 \equiv d/2 \pmod{d}$.
\end{enumerate}
\end{lem}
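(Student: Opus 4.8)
The plan is to prove the equivalence by a direct chain of elementary number-theoretic manipulations involving $\gcd$'s and congruences, exploiting the fact that $d$ is even and $\ge 3$ (so $d\ge 4$). The only tool needed beyond basic arithmetic is the standard identity $\gcd(q+1,d)=\gcd(q+1\bmod d,\,d)$ together with the observation that $d\mid q^2-1$ is equivalent to $d\mid (q-1)(q+1)$.

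\medskip

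\noindent\textbf{Plan of proof.}
First I would establish $(ii)\Rightarrow(i)$, which is the easier direction. Assume $d\equiv 0\pmod 4$ and $q+1\equiv d/2\pmod d$. Then $q-1\equiv d/2-2\pmod d$, so $(q-1)(q+1)\equiv (d/2)(d/2-2)=d^2/4-d\equiv 0\pmod d$ once one checks $d\mid d^2/4$, which holds because $4\mid d$. Hence $d\mid q^2-1$. For the gcd, from $q+1\equiv d/2\pmod d$ we get $\gcd(q+1,d)=\gcd(d/2,d)=d/2$, using that $d/2$ divides $d$. So $(i)$ holds.

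\medskip

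\noindent For $(i)\Rightarrow(ii)$: assume $d\mid q^2-1$ and $\gcd(q+1,d)=d/2$. Write $g=\gcd(q+1,d)=d/2$, so $d=2g$ and $g\mid q+1$, say $q+1=gt$. Then $\gcd(q+1,d)=\gcd(gt,2g)=g\cdot\gcd(t,2)=g$ forces $\gcd(t,2)=1$, i.e. $t$ is odd. Thus $q+1=gt$ with $t$ odd, and modulo $d=2g$ we have $q+1\equiv g\cdot(t\bmod 2)\cdot(\text{correction})$; more carefully, $q+1=gt$ and $t$ odd means $q+1\equiv g=d/2\pmod{2g}$ exactly when $t\equiv 1\pmod 2$, which we have — so $q+1\equiv d/2\pmod d$. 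It remains to show $4\mid d$, equivalently $2\mid g$. Suppose for contradiction $g$ is odd. Since $t$ is odd, $q+1=gt$ is odd, so $q$ is even; then $q-1$ is odd, so $(q-1)(q+1)$ is odd, contradicting $d=2g\mid q^2-1$ (as $d$ is even). Hence $g$ is even, $4\mid d$, and $(ii)$ holds.

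\medskip

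\noindent\textbf{Main obstacle.} The genuinely delicate point is the parity argument showing $4\mid d$ in the direction $(i)\Rightarrow(ii)$: one must correctly track that $\gcd(q+1,d)=d/2$ with $d$ even forces $q+1$ to be odd (via $t$ odd), which in turn forces $q$ even and $q-1$ odd, so that the only way $2\mid d$ can divide $q^2-1=(q-1)(q+1)$ is if $d/2$ supplies the factor of $2$ — i.e. $d/2$ is even. Getting the interplay between "the $2$-adic valuation of $q^2-1$" and "the $2$-adic valuation of $d$" stated cleanly, rather than through ad hoc case-checking on the parity of $q$, is where care is required; everything else is routine divisibility bookkeeping.
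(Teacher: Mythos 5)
Your proof is correct and follows essentially the same route as the paper: both reduce the condition $\gcd(q+1,d)=d/2$ to the congruence $q+1\equiv d/2\pmod d$ and then determine exactly when $d\mid q^2-1$. The only cosmetic difference is that in the direction $(i)\Rightarrow(ii)$ you obtain $4\mid d$ by a parity argument on $q^2-1=(q-1)(q+1)$, whereas the paper settles both directions at once with the single computation $q^2-1\equiv d^2/4\pmod d$, so that $d\mid q^2-1$ iff $d\mid d^2/4$ iff $4\mid d$.
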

\begin{proof}
Clearly, $\gcd(q+1, \,d)=d/2$ if and only if $q+1 \equiv d/2 \pmod{d}$.
Hence \[ q-1 \equiv (d-4)/2 \pmod{d}, \quad\quad q^2-1 \equiv d^2/4 \pmod{d}, \]
and so $d \mid q^2-1$ if and only if $d \mid (d^2/4)$, i.e., $d =4s$ for some $s\in\Z$.
$\hfill{}\Box$ \end{proof}

Combining Theorem~\ref{thm_UN} and Lemma~\ref{lem-n=2}, we deduce the next result.

\begin{thm}\label{thm_d/2}
Let $d \equiv 0 \pmod{4}$ and $q + 1 \equiv d/2 \pmod{d}$. Let $a,b,c\in \F_{q^2}$, $j\in\Z$ satisfy $ab\ne 0$, $b =\xi^{(q-1)j d/2}a^{q}$ and $ac^{q} =b^{q}c$.
Let $r,u,v$ be as in Theorem~\ref{thm_UN} and
\begin{equation*}\label{eq-d/2}
f(x)=(a x^{q} +b x +c)^r\big(e_{0} +e_{1}(a x^{q} +b x +c)^{(q^2-1)/d}\big) + u x^{q} +v x,
\end{equation*}
where $e_{0},e_{1} \in \F_{q^2}$. Then $f(x)$ is a PP of $\F_{q^2}$  if and only if
\[
\big[ (Be_{0} +A^{1-r}B^{qr}e_{0}^q +u^{q+1} -v^{q+1})^2 -(Be_{1} +A^{1-r}B^{qr}e_{1}^q)^2 \big]^{(q^2-1)/d} =(-1)^{r+1}.
\]
\end{thm}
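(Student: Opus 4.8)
The plan is to apply Theorem~\ref{thm_UN} with $n=2$, which by Lemma~\ref{lem-n=2} is exactly the regime $d\equiv 0\pmod 4$, $q+1\equiv d/2\pmod d$, together with the hypothesis $b=\xi^{(q-1)jd/2}a^q$. By that theorem, $f(x)$ permutes $\F_{q^2}$ if and only if $h(x)$ permutes $U_2=\{1,-1\}$, where $h(x)$ is given by~\eqref{h} with $\phi(x)=e_0+e_1x$. So the proof reduces to a finite, explicit check: writing $h(1)$ and $h(-1)$ and deciding when $\{h(1),h(-1)\}=\{1,-1\}$.

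First I would substitute $\phi(x)=e_0+e_1x$ into~\eqref{h}. At a point $x\in U_2$ we have $x^r=x$ up to sign conventions — more precisely, since $x=\pm1$ and we may absorb the sign — so the bracket becomes $B(e_0+e_1x)+A^{1-r}B^{qr}(e_0+e_1x)^q+u^{q+1}-v^{q+1}$. Note $(e_0+e_1x)^q=e_0^q+e_1^qx$ because $x=\pm1\in\F_q$. Thus for $x=1$ the bracket is $P_+:=Be_0+A^{1-r}B^{qr}e_0^q+u^{q+1}-v^{q+1}+(Be_1+A^{1-r}B^{qr}e_1^q)$, and for $x=-1$ it is $P_-$ with the last parenthesised term negated; writing $\mu:=Be_0+A^{1-r}B^{qr}e_0^q+u^{q+1}-v^{q+1}$ and $\nu:=Be_1+A^{1-r}B^{qr}e_1^q$ we get $P_\pm=\mu\pm\nu$. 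Hence $h(1)=1^r(\mu+\nu)^{(q^2-1)/d}$ and $h(-1)=(-1)^r(\mu-\nu)^{(q^2-1)/d}$, where the leading $(-1)^r$ comes from $x^r$ at $x=-1$.

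Next I would analyse when $h$ permutes $\{1,-1\}$. Since $h$ maps $U_2$ into $U_2$ (by the commutative diagram, or because $(q^2-1)/d$-th powers of elements of $T\setminus\{0\}$ lie in $U_n$), $h$ is a permutation iff $h(1)h(-1)=-1$, i.e. iff $h(1)\ne h(-1)$; equivalently $h(1)h(-1)=-1$. Computing, $h(1)h(-1)=(-1)^r(\mu+\nu)^{(q^2-1)/d}(\mu-\nu)^{(q^2-1)/d}=(-1)^r(\mu^2-\nu^2)^{(q^2-1)/d}$. Setting this equal to $-1$ gives $(\mu^2-\nu^2)^{(q^2-1)/d}=(-1)^{r+1}$, which is precisely the stated criterion once one substitutes back $\mu=Be_0+A^{1-r}B^{qr}e_0^q+u^{q+1}-v^{q+1}$ and $\nu=Be_1+A^{1-r}B^{qr}e_1^q$. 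One should also note the degenerate possibility that $\mu^2-\nu^2=0$: then $h(1)h(-1)=0\ne -1$ and also the displayed equation fails (its left side is $0\ne(-1)^{r+1}$), so the equivalence still holds, and no separate case is needed.

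The main obstacle, such as it is, is bookkeeping of the factor $x^r$ at $x=-1$ and making sure the power $(q^2-1)/d$ interacts correctly with it: one must check that $(-1)^{r\cdot}$ contributions and the exponent $(q^2-1)/d$ combine to give exactly $(-1)^{r+1}$ on the right-hand side rather than some other sign. Since $d\equiv 0\pmod 4$ forces $(q^2-1)/d$ to have a definite parity relationship (indeed from Lemma~\ref{lem-n=2}, $q^2-1\equiv d^2/4\pmod d$ and $(q^2-1)/d$ is even precisely when $8\mid d$... ), one has to be slightly careful; but the cleanest route is to observe that $h(-1)\in\{1,-1\}$ regardless, so $h(-1)=(-1)^r(\mu-\nu)^{(q^2-1)/d}$ is forced to be $\pm1$ and only the product $h(1)h(-1)$ matters, which is $(-1)^r(\mu^2-\nu^2)^{(q^2-1)/d}$, and the condition $h(1)h(-1)=-1$ rearranges directly to the displayed identity. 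I would present the argument in that order: invoke Theorem~\ref{thm_UN}, reduce to $U_2$, compute $h(\pm1)$, and conclude via $h(1)h(-1)=-1$.
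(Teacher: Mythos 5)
Your proposal is correct and follows exactly the route of the paper: invoke Theorem~\ref{thm_UN} with $n=2$ (justified by Lemma~\ref{lem-n=2}), use the fact that $h$ maps $\{0,1,-1\}$ to itself so that $h$ permutes $U_2$ iff $h(1)h(-1)=-1$, and then compute $h(\pm 1)=(\pm1)^r(\mu\pm\nu)^{(q^2-1)/d}$ to obtain $(\mu^2-\nu^2)^{(q^2-1)/d}=(-1)^{r+1}$. The paper leaves this last step as ``a simple calculation''; you carry it out correctly, including the degenerate case $\mu^2=\nu^2$ and the (implicit) fact that the hypotheses force $q$ odd.
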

\begin{proof}
Clearly $\phi(x)=e_{0} +e_{1}x$. By Theorem~\ref{thm_UN}, $f(x)$ is a PP of $\F_{q^2}$  if and only if
\[
h(x) = x^r\big[ B(e_{0} +e_{1}x) +A^{1-r}B^{qr}(e_{0} +e_{1}x)^{q} +u^{q+1} -v^{q+1}\big]^{(q^2-1)/d}
\]
permutes $U_2=\{-1,1\}$. According to the commutative diagram in the proof of Theorem~\ref{thm_UN}, $h(x)$ induces a mapping from $\{0,-1,1\}$ to itself. Hence $h(x)$ permutes $U_2$ if and only if $h(-1)h(1) =-1$.
A simple calculation can complete the proof.
$\hfill{}\Box$ \end{proof}

Taking $d=4$ and $r=1$ in Theorem~\ref{thm_d/2}, we obtain the following example.
\begin{exam}
Let $d =4$ and $q+1\equiv 2  \pmod{4}$. Let $b, c\in \F_{q^2}$, $j\in\Z$ satisfy
$b=\xi^{(q-1)2j}$ and $c^q =b^q c$. Let $e, e' \in \F_{q}$ and
\[f(x) = e(x^{q} +b x +c)^{\frac{q^2-1}{4} +1} +e'(x^{q} +b x +c)^{\frac{3(q^2-1)}{4} +1} -bx.\]
It follows from~\eqref{abc-un} that
$\{(x^{q} +bx +c)^{(q^2-1)/4} \mid x \in \F_{q^2}\}=\{0,-1,1\}$. Hence
\[ (x^{q} +b x +c)^{\frac{3(q^2-1)}{4}+1} \equiv (x^{q} +b x +c)^{\frac{q^2-1}{4}+1} \pmod{x^{q^2} -x}, \]
\[f(x) \equiv (e+e')(x^{q} +b x +c)^{\frac{q^2-1}{4} +1} -bx \pmod{x^{q^2} -x}. \]
Clearly $b^{q+1}=1$ and $B=(-b)^{q+1}=1$. 
From Theorem~\ref{thm_d/2}, $f(x)$ is a PP of $\F_{q^2}$ if and only if $[1 -4(e+e')^2]^{(q^2-1)/4} =1$.
The result is Theorem $5.3$ in~\cite{YZ15}.
\end{exam}

\subsection{The case $n=3$}
The next result is a particular case for $n=3$ and $\phi(x)=x$ or $x^2$.

\begin{thm}\label{thm-d/3}
Let $3 \mid d$, $d \mid q^2-1$ and $\gcd(q+1,\,d) =d/3$. Let $b,c \in \F_{q^2}$, $j\in\Z$ satisfy $b=\xi^{(q-1)jd/3}$ and $c^{q} =b^{q}c$. Define
\[
f(x) = (x^{q} +b x +c)^{\frac{k(q^2-1)}{d}\,+1} -bx, \text{~where $k=1, 2$.}
\]
Let $\omega=\xi^{(q^2-1)/3}$. Then $f(x)$ is a PP of\, $\F_{q^2}$  if and only if
\[ \big( (\omega^{kq}+\omega^{k} -1)^{\frac{q^2-1}{d}},\;\;
         (\omega^{2kq}+\omega^{2k} -1)^{\frac{q^2-1}{d}} \big)= (1,1) \text{~or~} (\omega, \,\omega^{2}).
\]
\end{thm}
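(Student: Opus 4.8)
The plan is to derive this statement by specializing Theorem~\ref{thm_UN} to the present parameters and then running a short case analysis on $U_3$. First I would recognize $f(x)$ as an instance of~\eqref{fx}: taking $a=1$, $u=0$, $v=-b$, $r=1$ and $\phi(x)=x^{k}$, one has $(x^{q}+bx+c)^{k(q^2-1)/d+1}=(x^{q}+bx+c)\,\phi\big((x^{q}+bx+c)^{(q^2-1)/d}\big)$. Then I would check that the hypotheses of Theorem~\ref{thm_UN} hold with $n=3$: indeed $ab=b\neq0$, $b^{q+1}=\xi^{(q^2-1)j\,d/3}=1$ so $a^{q+1}=1=b^{q+1}$, the relation $ac^{q}=c^{q}=b^{q}c$ is assumed, $\gcd(q+1,d)=d/3$ forces $n=3$, and $u^{q+1}=0\neq1=v^{q+1}$ places us in the ``$u^{q+1}\neq v^{q+1}$, $r=1$'' branch of the hypotheses, which is satisfied.

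Next I would compute the data appearing in~\eqref{h}. With $a=1$, $u=0$, $v=-b$ one gets $A=bu-av=b$ and $B=au^{q}-bv^{q}=(-1)^{q+1}b^{q+1}=1$ (using $b^{q+1}=1$ and that $(-1)^{q+1}=1$ in every characteristic), while $u^{q+1}-v^{q+1}=-1$. Substituting $r=1$, $\phi(x)=x^{k}$, $A^{1-r}=1$, $B^{qr}=1$ into~\eqref{h} yields, with $m=(q^2-1)/d$,
\[
h(x)=x\big(x^{k}+x^{kq}-1\big)^{m},
\]
and by Theorem~\ref{thm_UN} the polynomial $f(x)$ permutes $\F_{q^2}$ if and only if $h(x)$ permutes $U_3=\{1,\omega,\omega^{2}\}$; note $3\mid q^2-1$ (from $3\mid d\mid q^2-1$), so $\omega$ is a primitive cube root of unity in $\F_{q^2}$.

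Finally I would evaluate $h$ on $U_3$. One checks $h(1)=1\cdot(1+1-1)^{m}=1$, and, writing $\alpha=(\omega^{k}+\omega^{kq}-1)^{m}$ and $\beta=(\omega^{2k}+\omega^{2kq}-1)^{m}$, one has $h(\omega)=\omega\alpha$ and $h(\omega^{2})=\omega^{2}\beta$. Since $h(1)=1$, the map $h$ permutes $U_3$ if and only if $\{h(\omega),h(\omega^{2})\}=\{\omega,\omega^{2}\}$, and comparing the two possible matchings shows this happens exactly when ($\alpha=1$ and $\beta=1$, so $h$ fixes $\omega$ and $\omega^{2}$) or ($\alpha=\omega$ and $\beta=\omega^{-1}=\omega^{2}$, so $h$ interchanges $\omega$ and $\omega^{2}$). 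Hence $h$ permutes $U_3$ iff $(\alpha,\beta)=(1,1)$ or $(\omega,\omega^{2})$, which is precisely the stated condition. As in the proof of Theorem~\ref{thm_d/2}, one may additionally invoke that the commutative diagram of Theorem~\ref{thm_UN} forces $h$ to map $\{0\}\cup U_3$ into itself with $h(0)=0$; this makes the bookkeeping transparent, though it is not strictly needed.

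There is no serious obstacle in this argument, which is just a specialization of Theorem~\ref{thm_UN}. The one place warranting care is the verification that the hypotheses of Theorem~\ref{thm_UN} genuinely apply --- in particular that $b^{q+1}=1$, which is what collapses $A$ and $B$ to the simple values above, and that the parameters fall in the $r=1$ branch rather than the other one --- after which the evaluation of $h$ on the two nontrivial cube roots of unity is immediate.
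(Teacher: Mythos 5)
Your proposal is correct and follows essentially the same route as the paper: specialize Theorem~\ref{thm_UN} with $a=1$, $u=0$, $v=-b$, $r=1$, $\phi(x)=x^{k}$, note $B=1$ and $u^{q+1}-v^{q+1}=-1$ so that $h(x)=x(x^{kq}+x^{k}-1)^{(q^2-1)/d}$, and then use $h(1)=1$ to reduce the permutation of $U_3$ to the two stated possibilities for $(h(\omega),h(\omega^2))$. Your verification of the hypotheses of Theorem~\ref{thm_UN} (in particular $b^{q+1}=1$ and the $u^{q+1}\neq v^{q+1}$, $r=1$ branch) is slightly more explicit than the paper's, but the argument is the same.
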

\begin{proof}
Clearly $B=(-b)^{q+1}=1$ and $\phi(x)=x^k$. According to Theorem~\ref{thm_UN}, $f(x)$ is a PP of $\F_{q^2}$
if and only if $h(x)=x(x^{kq}+x^{k}-1)^{\frac{q^2-1}{d}}$ permutes $U_3=\{1,\omega, \omega^2\}$. Since $h(1)=1$,
$h(x)$ permutes $U_3$ if and only if $(h(\omega),~ h(\omega^2))= (\omega,\omega^2)$ or $(\omega^2,\omega)$.
$\hfill{}\Box$ \end{proof}

\begin{rem}
Let $3 \mid d$ and $\gcd(q+1, \,d)=d/3$. $(1)$ If\, $q+1 \equiv d/3 \pmod{d}$,
then $d \mid q^2-1$ if and only if\, $d \equiv 6\pmod{9}$.
$(2)$ If\, $q+1 \equiv 2d/3 \pmod{d}$, then $d \mid q^2-1$ if and only if\, $d \equiv 3 \pmod{9}$.
\end{rem}

Theorem~\ref{thm-d/3} unifies and generalizes some results in~\cite{YZ15}.
In fact, if $d=3$ and $q+1 \equiv 2 \pmod{3}$, then $3 \mid q-1$ and $\omega^{q}=\omega$.
Therefore we have the next corollaries.

\begin{cor}\label{cor-d=3}
Let $3 \mid q-1$, and let $b$, $c \in \F_{q^2}$ satisfy $b^{q+1}=1$ and $c^{q} =b^{q}c$. Define
\[
f(x) = (x^{q} +b x +c)^{\frac{k(q^2-1)}{3}\,+1} -bx, \text{~where $k=1, 2$.}
\]
Let $\omega=\xi^{(q^2-1)/3}$. Then $f(x)$ is a PP of\, $\F_{q^2}$  if and only if
\[
\big( (2\omega^{k}-1)^{\frac{q^2-1}{3}},\;\;
         (2\omega^{2k} -1)^{\frac{q^2-1}{3}} \big)= (1,1)\text{~or~}(\omega, \,\omega^{2}).
\]
\end{cor}

Taking $k=1$, Corollary~\ref{cor-d=3} reduces to Theorem $4.5$ in~\cite{YZ15}.
Applying Theorem~\ref{thm-d/3} to $d=6$ and $q+1 \equiv 2 \pmod{6}$, we have the following result.

\begin{cor}\label{cor-d=6}
Let $q+1 \equiv 2 \pmod{6}$, and let $b,c \in \F_{q^2}$, $j\in\Z$ satisfy $b=\xi^{(q-1)2j}$ and $c^{q} =b^{q}c$. Define
\[
f(x) = (x^{q} +b x +c)^{\frac{k(q^2-1)}{6}\,+1} -bx, \text{~where $k=1, 2$.}
\]
Let $\omega=\xi^{(q^2-1)/3}$. Then $f(x)$ is a PP of\, $\F_{q^2}$  if and only if
\[ \big( (2\omega^{k}-1)^{\frac{q^2-1}{6}},\;\;
         (2\omega^{2k} -1)^{\frac{q^2-1}{6}} \big)= (1,1) \text{~or~} (\omega, \,\omega^{2}).
\]
\end{cor}

Corollary~\ref{cor-d=6} unifies Theorems $6.3$ and $6.4$ in~\cite{YZ15}.

\section{$\phi(x)= 1+ x +x^2 +\dotsm +x^{d-1}$}\label{sec_h=1}

In this section we consider the case $\phi(x)=\sum_{k =0}^{d-1}x^{k}$ and $r=1$.
If $d=2$ and $d \mid q^2 -1$ then $d \mid q+1$ and, by Corollary~\ref{d-q+1}, $f(x)$ in~\eqref{fx}
is equivalent to a simple linearized polynomial.
Hence we assume $d \geq 3$ in this section. Let $\omega =\xi^{(q^2-1)/d}$. Then
\begin{equation}\label{eq_h=1}
 \phi(\omega^i)=\left\{
\begin{array}{ll}
    0 & \text{ for \,$i =1,2,\dotsc,d-1$}, \\
    d & \text{ for \,$i = d$}.
\end{array}\right.
\end{equation}
The following theorem is based on this observation.

\begin{thm}\label{thm=1} 
Let $d \geq 3$, $d \mid q^2 -1$, and $a,b,c,u,v\in \F_{q^2}$ satisfy
$ab \ne 0$, $a^{q+1}= b^{q+1}$, $ac^{q} =b^{q}c$ and $u^{q+1}\ne v^{q+1}$. Define
\begin{equation}\label{=1}
f(x) =\sum_{k =0}^{d-1}(a x^{q} + b x + c)^{\frac{k\,(q^2-1)}{d}\, +1}+ u x^{q} + v x.
\end{equation}
Then the following statements hold:
\begin{enumerate}[$(i)$]
  \item The polynomial $f(x)$ is a PP of $\F_{q^2}$ if
       \begin{equation}\label{iff=1}
      \bigg(1+ \dfrac{d(B +B^{q})}{u^{q+1} -\,v^{q+1}} \bigg)^{(q^2 -1)/d} =1.
    \end{equation}
  \item For $d$ an odd divisor of $q-1$, $f(x)$ is a PP of $\F_{q^2}$ if and only if \eqref{iff=1} holds.
\end{enumerate}
\end{thm}
\begin{proof}
It follows from Theorem~\ref{thm_gene} that $f(x)$ is a PP of $\F_{q^2}$  if and only if
\[
g(x)=x [B\phi(x^{(q^2 -1)/d}) +B^{q}\phi(x^{(q^2 -1)/d})^{q} +u^{q+1}-v^{q+1}]
\]
permutes $S$, where $\phi(x)=\sum_{k =0}^{d-1}x^k$. Let
\[
D_1=\{x\in S \mid x\ne 0, x^{(q^2 -1)/d}= 1\}, \quad D_2=\{x\in S \mid x\ne 0, x^{(q^2 -1)/d} \ne 1\}.
\]
Then $S=\{0\} \cup D_1\cup D_2$. It follows from \eqref{eq_h=1} that
\[
\phi(x^{(q^2-1)/d})=\left\{\begin{array}{ll}
    d  & \text{ for $x \in D_1$,} \\
    0  & \text{ for $x \in D_2$,}
\end{array}\right. \text{\quad }
\begin{gathered}
g(x)=\left\{\begin{array}{ll}
    \alpha x  & \text{ for $x \in D_1$,} \\
    \beta x   & \text{ for $x \in D_2$,}
\end{array}\right.
\end{gathered} \]
where $ \alpha =d(B +B^{q}) +u^{q+1} -v^{q+1}$ and $\beta= u^{q+1} -v^{q+1}\ne 0$.

$(i)$ If \eqref{iff=1} holds, i.e., $(\alpha/\beta)^{(q^2 -1)/d} =1$, then
$\alpha^{(q^2 -1)/d} =\beta^{(q^2 -1)/d}$. Therefore
\[
\{\alpha x \mid x \in D_1\} \cap \{\,\beta x \mid x \in D_2\} =\emptyset,
\]
and so $g(x)$ is injective on $S$. Because $g(x)$ induces a mapping from $S$ to $S$, we deduce that $g(x)$ permutes $S$. Furthermore, $f(x)$ is a PP of $\F_{q^2}$.

$(ii)$ If $d$ is an odd divisor of $q-1$, by Corollary~\ref{cor-odd}, $f(x)$ is a PP of $\F_{q^2}$  if and only if
\[
h(x) = x[ B\phi(x) +B^{q}\phi(x)^{q} +u^{q+1} -v^{q+1}]^{(q^2-1)/d}
\]
permutes $U_d=\{1, \omega, \omega^{2},\dotsm, \omega^{d-1}\}$, where $\omega=\xi^{(q^2-1)/d}$.
The formula~\eqref{eq_h=1} leads to
\[
h(x)=\left\{\begin{array}{ll}
    \alpha      & \text{ for $x =1$, }\\
    \beta x   & \text{ for $x =\omega, \omega^{2},\dotsm, \omega^{d-1}$,}
\end{array}\right.
\]
where $\alpha =( d(B +B^{q}) +u^{q+1} -v^{q+1})^{(q^2 -1)/d}$
and $\beta =(u^{q+1} -v^{q+1})^{(q^2 -1)/d}\ne 0$.
As $\beta \in U_d$, we have $\{\alpha, \beta\omega, \dotsm, \beta\omega^{d-1} \} = U_d$
if and only if $\{\alpha/\beta, \omega, \dotsm, \omega^{d-1} \} = U_d$.
Hence $h(x)$ permutes $U_d$ if and only if $\alpha/\beta =1$.
$\hfill{}\Box$ \end{proof}

Recall that $B =au^{q} -bv^{q}$, if $u=a$ and $v=0$, then $B =B^q=a^{q+1}$. If $u=0$ and $v=-b$, then $B =B^q=b^{q+1}$. Thus we obtain the next result.

\begin{cor}\label{u=a}
Under the hypotheses of Theorem~\ref{thm=1}, take $u=a$ and $v=0$.
Then $f(x)$ in \eqref{=1} is a PP of $\F_{q^2}$ if $(1+2d)^{(q^2 -1)/d}=1$.
Take $u=0$ and $v=-b$. Then $f(x)$ in \eqref{=1} is a PP of $\F_{q^2}$ if $(1-2d)^{\frac{q^2 -1}{d}}=1$.
\end{cor}

We can also derive the following classes of PPs from Theorem~\ref{thm=1}.

\begin{cor}\label{=1cor2}
For $q$ an odd prime power, let $d \ge 3$ be an odd divisor of\, $q-1$, let $b$, $c\in \F_{q^2}$
satisfy $b^{q+1} =1$ and $c^{q} =b^{q}c$, and let $e \in \F_{q}$ with $e \ne 0$. Then
\[
f(x) =\sum_{k =0}^{d-1}e(x^{q} +bx +c)^{\frac{k(q^2-1)}{d}+1} +(1-e)x^{q} - b(1+e) x
\]
is a PP of $\F_{q^2}$ if and only if\, $(1-d)^{(q^2 -1)/d} =1$.
\end{cor}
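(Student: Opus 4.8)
The plan is to pull the scalar $e$ out of $f(x)$ so as to land in the setting of Theorem~\ref{thm=1}, and then to simplify condition~\eqref{iff=1} for the resulting polynomial. Since $e\in\F_{q}$ with $e\ne 0$, multiplication by $e^{-1}$ permutes $\F_{q^2}$, so $f(x)$ is a PP of $\F_{q^2}$ if and only if $e^{-1}f(x)$ is. Putting $u_0=(1-e)/e$ and $v_0=-b(1+e)/e$, we have
\[
e^{-1}f(x)=\sum_{k=0}^{d-1}(x^{q}+bx+c)^{\frac{k(q^2-1)}{d}+1}+u_0x^{q}+v_0x,
\]
which is exactly the polynomial of Theorem~\ref{thm=1} with $a=1$ and with $u,v$ replaced by $u_0,v_0$.

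Next I would check the hypotheses of Theorem~\ref{thm=1}$(ii)$. The requirements $ab\ne 0$, $a^{q+1}=b^{q+1}$ and $ac^{q}=b^{q}c$ follow immediately from $b^{q+1}=1$ (which also forces $b\ne 0$) and $c^{q}=b^{q}c$, while $d\ge 3$ is an odd divisor of $q-1$, hence of $q^2-1$. The one condition needing a computation is $u_0^{q+1}\ne v_0^{q+1}$: since $e$ and $1\pm e$ lie in $\F_{q}$ and $b^{q+1}=1$, one finds $u_0^{q+1}=(1-e)^2/e^2$ and $v_0^{q+1}=(1+e)^2/e^2$, so $u_0^{q+1}-v_0^{q+1}=-4/e$, which is nonzero because $q$ is odd and $e\ne 0$.

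Theorem~\ref{thm=1}$(ii)$ then applies: $e^{-1}f(x)$, and hence $f(x)$, is a PP of $\F_{q^2}$ if and only if
\[
\Bigl(1+\frac{d(B+B^{q})}{\,u_0^{q+1}-v_0^{q+1}\,}\Bigr)^{(q^2-1)/d}=1,
\]
where $B=u_0^{q}-bv_0^{q}$. The remaining step is arithmetic: from $u_0^{q}=(1-e)/e$, $v_0^{q}=-b^{q}(1+e)/e$ and $b^{q+1}=1$ one gets $B=2/e=B^{q}$, so $d(B+B^{q})/(u_0^{q+1}-v_0^{q+1})=d\,(4/e)/(-4/e)=-d$, and the condition collapses to $(1-d)^{(q^2-1)/d}=1$, as asserted.

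No genuine obstacle arises; the proof is essentially the bookkeeping that produces $e^{-1}f(x)$ together with repeated use of $e\in\F_{q}$, $q$ odd, and $b^{q+1}=1$ to evaluate $q$-th powers. As a consistency check one may instead apply Corollary~\ref{cor-odd} directly to $f$ itself, taking $\phi(x)=e\sum_{k=0}^{d-1}x^{k}$, $u=1-e$, $v=-b(1+e)$: here $B=au^{q}-bv^{q}=2$ and $u^{q+1}-v^{q+1}=-4e$, and since $d\mid q-1$ forces $U_{d}\subseteq\F_{q}$, the induced map on $U_{d}$ is $h(x)=x[4\phi(x)-4e]^{(q^2-1)/d}$; because $\phi(\omega^{i})=0$ for $1\le i\le d-1$ and $\phi(1)=ed$, it sends $\omega^{i}\mapsto(-4e)^{(q^2-1)/d}\omega^{i}$ for $i\ne 0$ and $1\mapsto(4e(d-1))^{(q^2-1)/d}$, so $h$ permutes $U_{d}$ exactly when these two scalars coincide, i.e. when $(1-d)^{(q^2-1)/d}=1$.
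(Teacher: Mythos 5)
Your proof is correct and follows essentially the same route as the paper: normalize by the nonzero scalar $e\in\F_q$ to put $f$ in the form of Theorem~\ref{thm=1} with $a=1$, $u=(1-e)/e$, $v=-b(1+e)/e$, compute $B=B^q=2/e$ and $u^{q+1}-v^{q+1}=-4/e$, and apply part $(ii)$ to get the condition $(1-d)^{(q^2-1)/d}=1$. Your extra consistency check via Corollary~\ref{cor-odd} and the explicit verification that $u^{q+1}\ne v^{q+1}$ (using $q$ odd) are welcome details that the paper leaves implicit.
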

\begin{proof}
Clearly  $a=1$, $u =(1-e)/e$ and $v = -b(1+e)/e$. Then $B= 2/e$, $u^{q+1}-v^{q+1}=-4/e$  and
$1+d(B^{q} +B)/(u^{q+1}\, -v^{q+1}) =1-d$.
$\hfill{}\Box$ \end{proof}

Corollary \ref{=1cor2} generalizes Theorem $4.9$ in~\cite{YZ15}, where $d=3$ is considered.

\begin{cor}\label{=1cor3}
Let $d\ge 3$ be an odd divisor of\, $q-1$, and $b,c\in \F_{q^2}$
satisfy $b^{q+1} =1$ and $c^{q} =b^{q}c$. Let $e \in \F_{q}$ with $e(1+2e)\ne 0$. Then
\[
f(x) =\sum_{k =0}^{d-1}e(x^{q} +bx +c)^{\frac{k(q^2-1)}{d}+1} -ex^{q} -b(1+e)x
\]
is a PP of $\F_{q^2}$ if and only if\, $\Big(1-\dfrac{2de}{1+2e}\Big)^{(q^2 -1)/d} =1$.
\end{cor}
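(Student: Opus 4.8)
The plan is to follow the template of the proof of Corollary~\ref{=1cor2}: rescale $f$ so that it fits the hypothesis of Theorem~\ref{thm=1}, verify that theorem's hypotheses, and then evaluate the single scalar that appears in condition~\eqref{iff=1}.

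First I would observe that since $e\ne 0$, the map $f$ permutes $\F_{q^2}$ if and only if $e^{-1}f$ does, and
\[
e^{-1}f(x)=\sum_{k=0}^{d-1}(x^{q}+bx+c)^{\frac{k(q^2-1)}{d}+1}+ux^{q}+vx
\]
with $a=1$, $u=-1$, $v=-b(1+e)/e$. Then I would check the hypotheses of Theorem~\ref{thm=1}: $ab\ne 0$ because $b^{q+1}=1$ forces $b\ne 0$; $a^{q+1}=b^{q+1}=1$; $ac^{q}=b^{q}c$ is exactly the standing assumption $c^{q}=b^{q}c$; and $d\mid q^2-1$ since $d\mid q-1$. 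The last remaining hypothesis $u^{q+1}\ne v^{q+1}$ will drop out of the next step.

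Second, I would compute the constants $B=au^{q}-bv^{q}$ and $u^{q+1}-v^{q+1}$. The only point requiring attention is that $e$, $1+e$, $1+2e$ all lie in $\F_{q}$, so Frobenius fixes them, and $b^{q+1}=1$ kills the dependence on $b$; this yields $B=1/e$ (hence $B^{q}=1/e$ and $B+B^{q}=2/e$) and $u^{q+1}-v^{q+1}=-(1+2e)/e^{2}$. The hypothesis $e(1+2e)\ne 0$ makes the latter nonzero, so $u^{q+1}\ne v^{q+1}$ as required. Substituting into~\eqref{iff=1},
\[
1+\frac{d(B+B^{q})}{u^{q+1}-v^{q+1}}=1-\frac{2de}{1+2e}.
\]
Since $d$ is an odd divisor of $q-1$, part $(ii)$ of Theorem~\ref{thm=1} applies and states precisely that $e^{-1}f$---and hence $f$---is a PP of $\F_{q^2}$ if and only if $\bigl(1-\tfrac{2de}{1+2e}\bigr)^{(q^2-1)/d}=1$, which is the asserted criterion.

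I do not expect any genuine obstacle: the argument is a direct specialization of Theorem~\ref{thm=1}$(ii)$, and the only work is the elementary algebra identifying $B$ and $u^{q+1}-v^{q+1}$, together with the bookkeeping that all the relevant quantities are $\F_{q}$-rational so that raising to the $q$-th power acts trivially on them.
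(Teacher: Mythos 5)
Your proposal is correct and follows exactly the route the paper intends: the paper omits the proof of this corollary but proves the analogous Corollary~\ref{=1cor2} by the same specialization of Theorem~\ref{thm=1}$(ii)$, reading off $a=1$, $u=-1$, $v=-b(1+e)/e$ after dividing by $e$. Your computations $B=1/e$ and $u^{q+1}-v^{q+1}=-(1+2e)/e^{2}$ are right, and the hypothesis $e(1+2e)\ne 0$ indeed supplies $u^{q+1}\ne v^{q+1}$ as required.
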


Corollary \ref{=1cor3} generalizes Theorem $4.10$ in~\cite{YZ15}, where $d=3$ is considered.

\section{$\phi(x) \in \F_{q}[x]$}\label{sec_d_q-1}

The polynomial $\phi(x)$ over $\F_{q}$ is considered in this section. The fact that $\phi(x)^q =\phi(x^q)$ is employed to reduce the polynomials $g(x)$ in \eqref{eq-g} and $h(x)$ in \eqref{h}.

\subsection{The reduced form of $g(x)$}

Let $\phi(x) \in \F_{q}[x]$. Then $\phi(x)^q =\phi(x^q)$.  If $d \mid q-1$, we have
\[
(x^{(q^2-1)/d})^{q} \equiv  x^{(q^2-1)/d} \pmod{x^{q^2}-x}.
\]
Also note that
\[
g(x)= x^{r}\big[ B\phi(x^{(q^2-1)/d}) +A^{1-r}B^{qr}\phi(x^{(q^2-1)/d})^{q}\big] +(u^{q+1} -v^{q+1})x.
 \]
Therefore
\[
g(x) \equiv (B +A^{1-r}B^{qr})x^{r}\phi(x^{(q^2-1)/d}) +(u^{q+1} -v^{q+1})x \pmod{x^{q^2}-x}.
\]
If $B +A^{1-r}B^{qr}=0$ then
\[
g(x) \equiv (u^{q+1} -v^{q+1})x \pmod{x^{q^2}-x}.
\]
Thus $g(x)$ permutes $S$ if and only if $u^{q+1} \ne v^{q+1}$. By Theorem~\ref{thm_gene}, we get the next result.

\begin{thm}\label{thm1-hq=h}
Let $\phi(x) \in \F_{q}[x]$, $d \mid q-1$ and $r\in\Z^{+}$.
Let $a,b,c,u,v \in \F_{q^2}$ satisfy $ab \ne 0$, $a^{q+1} =b^{q+1}$,
$ac^{q} =b^{q}c$ and $B +A^{1-r}B^{qr}=0$. Then $f(x)$ in~\eqref{fx}
is a PP of $\F_{q^2}$ if and only if $u^{q+1} \ne v^{q+1}$.
\end{thm}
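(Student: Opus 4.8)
The plan is to obtain this theorem as a direct corollary of Theorem~\ref{thm_gene} after simplifying the reduced polynomial $g(x)$ of~\eqref{eq-g}. First I would apply Theorem~\ref{thm_gene}: since $ab \ne 0$, $a^{q+1}=b^{q+1}$ and $ac^q=b^q c$, the polynomial $f(x)$ in~\eqref{fx} permutes $\F_{q^2}$ if and only if
\[
g(x)=x^r\big[B\phi(x^{(q^2 -1)/d}) +A^{1-r}B^{qr}\phi(x^{(q^2 -1)/d})^{q}\big] +(u^{q+1}-v^{q+1})x
\]
permutes $S$; moreover, as noted after Theorem~\ref{thm_gene}, $g$ sends $S$ into $S$, so ``$g$ permutes $S$'' is equivalent to ``$g$ is injective on $S$''.

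The next step is to simplify $g$ modulo $x^{q^2}-x$ using the two standing hypotheses $\phi(x)\in\F_q[x]$ and $d\mid q-1$. Because $\phi$ has coefficients in $\F_q$, the Frobenius gives $\phi(t)^q=\phi(t^q)$ for every $t$; because $d\mid q-1$, the exponent $m=(q^2-1)/d$ satisfies $m(q-1)=\frac{(q-1)^2(q+1)}{d}\equiv 0\pmod{q^2-1}$, hence $(x^m)^q\equiv x^m\pmod{x^{q^2}-x}$. Combining these, $\phi(x^m)^q=\phi((x^m)^q)\equiv\phi(x^m)$ as functions on $\F_{q^2}$, so
\[
g(x)\equiv (B+A^{1-r}B^{qr})\,x^r\phi(x^{(q^2-1)/d})+(u^{q+1}-v^{q+1})x\pmod{x^{q^2}-x}.
\]
The hypothesis $B+A^{1-r}B^{qr}=0$ then kills the first term, so $g(x)\equiv(u^{q+1}-v^{q+1})x$ on $\F_{q^2}$, hence on $S$.

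It remains to decide when $y\mapsto\mu y$ permutes $S$, where $\mu:=u^{q+1}-v^{q+1}$. Since $(u^{q+1})^q=u^{q+1}$ and likewise for $v$, one has $\mu\in\F_q$; and by~\eqref{S} (equivalently Lemma~\ref{lem_Fq}) $S=\xi^{-i}\F_q$ for some $i$, a nonzero scalar multiple of the subfield $\F_q$. Multiplication by a nonzero element of $\F_q$ is a bijection of $\F_q$, hence of $S$, so $g$ permutes $S$ when $\mu\ne 0$; and when $\mu=0$ we get $g\equiv 0$ on the $q\ge 2$ element set $S$, so $g$ is not injective. Feeding this back through Theorem~\ref{thm_gene} yields the claimed equivalence. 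The degenerate case $A=0$ needs no separate treatment: by step $(iv)$ of the proof of Theorem~\ref{thm_gene} it forces $B=0$ and $u^{q+1}=v^{q+1}$, so both sides of the equivalence are false. I do not anticipate a genuine obstacle; the only points requiring a little care are the exponent congruence $mq\equiv m\pmod{q^2-1}$ and the remark that $\mu$ lies in $\F_q$, which is precisely what makes scaling by $\mu$ preserve the coset $S$.
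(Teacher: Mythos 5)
Your argument is correct and is essentially the paper's own proof: Section~7.1 derives the same reduction $g(x)\equiv (B+A^{1-r}B^{qr})x^r\phi(x^{(q^2-1)/d})+(u^{q+1}-v^{q+1})x$ from $\phi(x)^q=\phi(x^q)$ and $d\mid q-1$, then applies Theorem~\ref{thm_gene}. Your extra checks (that $u^{q+1}-v^{q+1}\in\F_q$ so scaling preserves the coset $S$, and the degenerate case $A=0$) are sound and only make the paper's implicit steps explicit.
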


Theorem~\ref{thm1-hq=h} gives explicit conditions in which $f(x)$ is a PP of $\F_{q^2}$.
We derive the following classes of PPs from Theorem~\ref{thm1-hq=h}.
\begin{cor}\label{cor-fq}
Let $\phi(x) \in \F_{q}[x]$, $d \mid q-1$ and $r\in\Z^{+}$.
Let $c, u \in \F_{q}$ and $e \in \F_{q^2}$ with $e+e^q=0$. Then
\[
  f(x)=(x^{q} +x +c)^r\phi((x^{q} +x +c)^{(q^2 -1)/d})  + ux^{q} +(u-e)x
\]
is a PP of $\F_{q^2}$  if and only if\, $e \ne 0$.
\end{cor}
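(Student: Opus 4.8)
The plan is to obtain Corollary~\ref{cor-fq} as a direct specialization of Theorem~\ref{thm1-hq=h} under the substitution $a=b=1$ and $v=u-e$. First I would observe that with these choices $f(x)$ is exactly the polynomial in~\eqref{fx}, and that the standing hypotheses of Theorem~\ref{thm1-hq=h} are met: $ab=1\ne0$, $a^{q+1}=b^{q+1}=1$, and $ac^{q}=c^{q}=c=b^{q}c$ because $c\in\F_{q}$; the divisibility $d\mid q-1$ is assumed, and $\phi(x)\in\F_{q}[x]$. What remains is to verify the extra condition $B+A^{1-r}B^{qr}=0$ and then to translate the conclusion $u^{q+1}\ne v^{q+1}$ into $e\ne0$.

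To this end I would compute $A$ and $B$ explicitly. From $A=bu-av=u-v$ we get $A=e$. From $B=au^{q}-bv^{q}=u^{q}-v^{q}$, using $u\in\F_{q}$ (so $u^{q}=u$) and $e+e^{q}=0$ (so $e^{q}=-e$ and hence $v^{q}=(u-e)^{q}=u+e$), we obtain $B=-e$ and therefore $B^{q}=e$. Assuming for the moment $e\ne0$ (so that $A=e\ne0$ and $B=-e\ne0$, making the integer powers legitimate for all $r\in\Z$), we find $A^{1-r}B^{qr}=e^{1-r}(B^{q})^{r}=e^{1-r}e^{r}=e$, whence $B+A^{1-r}B^{qr}=-e+e=0$, exactly as required by Theorem~\ref{thm1-hq=h}.

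Theorem~\ref{thm1-hq=h} then tells us that, for $e\ne0$, $f(x)$ permutes $\F_{q^2}$ if and only if $u^{q+1}\ne v^{q+1}$. Since $u\in\F_{q}$ gives $u^{q+1}=u^{2}$, and $v^{q+1}=v^{q}v=(u+e)(u-e)=u^{2}-e^{2}$, this inequality becomes $e^{2}\ne0$, which holds because $e\ne0$. Hence $f(x)$ is a PP of $\F_{q^2}$ whenever $e\ne0$. For the remaining case $e=0$ one has $v=u$, hence $A=bu-av=0$; then step~$(iv)$ in the proof of Theorem~\ref{thm_gene} shows that $f(x)$ fails to be injective on the fibres $\theta^{-1}(s)$ and so is not a PP of $\F_{q^2}$. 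Combining the two cases yields the stated equivalence.

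All the computations are routine; the only points requiring a little care are the systematic use of $u^{q}=u$ and $e^{q}=-e$ when evaluating $q$-th powers, and the bookkeeping for the degenerate case $e=0$, where $A=0$ renders the hypothesis $B+A^{1-r}B^{qr}=0$ of Theorem~\ref{thm1-hq=h} formally meaningless and forces a direct appeal to Theorem~\ref{thm_gene} instead. I do not expect any genuine obstacle here.
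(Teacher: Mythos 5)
Your proof is correct and follows exactly the route the paper intends: Corollary~\ref{cor-fq} is stated as a specialization of Theorem~\ref{thm1-hq=h} with $a=b=1$, $v=u-e$, and your computations $A=e$, $B=-e$, $B^q=e$, $A^{1-r}B^{qr}=e$ verify the hypothesis $B+A^{1-r}B^{qr}=0$ and reduce $u^{q+1}\ne v^{q+1}$ to $e\ne 0$. Your separate treatment of the degenerate case $e=0$ via $A=0$ and step $(iv)$ of Theorem~\ref{thm_gene} is a sensible (and needed) piece of bookkeeping that the paper leaves implicit.
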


Corollary \ref{cor-fq} generalizes Corollary \ref{cor-a=b=1}, which only considers the case $\phi(x)=1$.

\begin{cor}\label{cor-fq2}
Let $\phi(x) \in \F_{q}[x]$, $d \mid q-1$ and $r$ be even.
Let $u, e \in \F_{q}$ and $c \in \F_{q^2}$ with $c+c^q=0$.  Then
\[
  f(x)=(x^{q} -x +c)^r \phi((x^{q} -x +c)^{(q^2 -1)/d}) + ux^{q} +(u+e)x
\]
is a PP of $\F_{q^2}$ if and only if\, $e(2u+e) \ne 0$.
\end{cor}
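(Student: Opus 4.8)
The plan is to obtain Corollary~\ref{cor-fq2} from Theorem~\ref{thm1-hq=h} by specializing to $a=1$, $b=-1$, $v=u+e$, for which $f(x)$ is indeed of the form~\eqref{fx} with $\phi(x)\in\F_{q}[x]$ and $d\mid q-1$. First I would record the auxiliary quantities: since $u,e\in\F_{q}$,
\[
A=bu-av=-(2u+e),\qquad B=au^{q}-bv^{q}=u+v=2u+e,
\]
so $A=-B$ and $B\in\F_{q}$ (hence $B^{q}=B$). The hypotheses $ab\ne 0$ and $a^{q+1}=b^{q+1}$ are immediate because $(-1)^{q+1}=1$ in every characteristic, and $ac^{q}=b^{q}c$ is exactly $c^{q}=-c$, i.e.\ the assumption $c+c^{q}=0$. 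The remaining hypothesis $B+A^{1-r}B^{qr}=0$ is where the evenness of $r$ is essential: as $1-r$ is odd, $A^{1-r}=(-B)^{1-r}=-B^{1-r}$ and $B^{qr}=B^{r}$, so $A^{1-r}B^{qr}=-B^{1-r}B^{r}=-B$, and the sum vanishes. (When $r\ge 2$ this also needs $A=-B\ne 0$, a case I handle separately below.)

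Assuming $e(2u+e)\ne 0$, i.e.\ $B\ne 0$, every hypothesis of Theorem~\ref{thm1-hq=h} is met, and the theorem asserts that $f$ permutes $\F_{q^{2}}$ if and only if $u^{q+1}\ne v^{q+1}$. Since $u,e\in\F_{q}$,
\[
u^{q+1}-v^{q+1}=u^{2}-(u+e)^{2}=-e(2u+e)\ne 0,
\]
so $f$ is a permutation polynomial; this gives the ``if'' direction.

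For the converse I would prove that $f$ fails to be a PP whenever $e(2u+e)=0$, distinguishing two subcases. If $2u+e=0$ then $A=0$, and the fibrewise argument in steps~$(iii)$ and $(iv)$ of the proof of Theorem~\ref{thm_gene} applies verbatim: on each nonempty fibre $\theta^{-1}(s)$ of $\theta(x)=x^{q}-x+c$ the restriction of $f$ is the affine map $x\mapsto(v-bu/a)x+\gamma$ whose linear coefficient $v-bu/a=2u+e=0$, so $f$ is constant on $\theta^{-1}(s)$; since $\theta$ is $\F_{q}$-affine these fibres are cosets of $\F_{q}$ and hence have $q\ge 2$ elements, so $f$ is not injective. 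If instead $2u+e\ne 0$ but $e=0$, then necessarily $q$ is odd and $u\ne 0$, so $B=2u\ne 0$ and Theorem~\ref{thm1-hq=h} still applies; here $v=u$, so $u^{q+1}=v^{q+1}$ and the theorem gives that $f$ is not a PP. Combining the two directions yields the claimed equivalence.

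I expect the only real subtlety to be the degenerate case $2u+e=0$: there $A=B=0$, so Theorem~\ref{thm1-hq=h} cannot be invoked directly (the symbol $A^{1-r}$ is meaningless for even $r\ge 2$), and one must instead return to the elementary non-injectivity argument underlying the AGW criterion, precisely as in step~$(iv)$ of the proof of Theorem~\ref{thm_gene}.
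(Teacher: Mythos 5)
Your proposal is correct and follows exactly the route the paper intends: the corollary is stated as a specialization of Theorem~\ref{thm1-hq=h} with $a=1$, $b=-1$, $v=u+e$, where $A=-B=-(2u+e)$, evenness of $r$ forces $B+A^{1-r}B^{qr}=0$, and $u^{q+1}-v^{q+1}=-e(2u+e)$ converts the theorem's criterion into $e(2u+e)\ne 0$. Your separate treatment of the degenerate case $2u+e=0$ (where $A=B=0$ and $A^{1-r}$ may be a negative power of zero), via the fibrewise non-injectivity argument from step $(iv)$ of Theorem~\ref{thm_gene}, is a careful touch that the paper leaves implicit but does not change the approach.
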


Corollary \ref{cor-fq2} generalizes \cite[Theorem 4]{ZH12},
which considers the case $\phi(x)=1$ and only gives the sufficient part.

\subsection{The reduced form of $h(x)$}

Let $u^{q+1} =v^{q+1}$. Then, by $\phi(x)^q =\phi(x^q)$, the polynomial $h(x)$ in \eqref{h} reduces to
\[
 h(x) =x^r[B\phi(x) +A^{1-r}B^{qr} \phi(x^q)]^{(q^2-1)/d}.
\]
If $n \mid q-1$ then $x^q \equiv x$ for $x \in U_n$, and so we may substitute $x$ for $x^q$ in $h(x)$. Hence
\[
 h(x) =(B +A^{1-r}B^{qr})^{(q^2-1)/d} x^r\phi(x)^{(q^2-1)/d}.
\]
Since $d \mid q^2-1$ and $d=n\cdot\gcd(q+1, d)$ imply $n \mid q-1$, we deduce the next result.

\begin{thm}\label{fqx}
Under the hypotheses of Theorem~\ref{thm_UN}, take $\phi(x) \in \F_{q}[x]$ and $u^{q+1} =v^{q+1}$.
Then $f(x)$ in~\eqref{fx} is a PP of $\F_{q^2}$ if and only if $B +A^{1-r}B^{qr} \ne 0$ and $x^r\phi(x)^{(q^2-1)/d}$ permutes $U_{n}$.
\end{thm}

In Theorem~\ref{fqx}, let $q=11$, $d=15$, $n=5$, $j=2$, $a=-b=u=v=1$ and $r=3$. Then $-A=B=2$ and $B +A^{1-r}B^{qr} =4\ne 0$. Hence we obtain the following example.
\begin{exam}
Let $\phi(x) \in \F_{11}[x]$ and $c \in \F_{121}$ with $c+c^{q}=0$. Then
\[
(x^{11}-x+c)^3 \phi((x^{11}-x+c)^8) +x^{11} +x
\]
is a PP of $\F_{121}$ if and only if $x^3\phi(x)^8$ permutes $U_{5}=\{1,4,5,9,3\}$.
\end{exam}

\section{Conclusions}

By constructing two commutative diagrams, we present large classes of permutation polynomials over $\F_{q^2}$. These results unify and generalize some families of permutation polynomials. The idea and technology of this paper is expected to be extended to other finite fields; for example, $\F_{q^3}$ or $\F_{q^n}$. These permutation polynomials may have many practical applications, including constructions of cryptographic functions, cyclic codes, difference sets, and sequences with good randomness.

\begin{acknowledgements}
We are grateful to the two anonymous referees for useful comments and suggestions.
\end{acknowledgements}


\begin{thebibliography}{}
%
%
\bibitem{AGW}
Akbary A.,  Ghioca D.,  Wang Q.: On constructing permutations of finite fields. Finite Fields Appl. 17, 51--67 (2011).

\bibitem{CHZ14}
Cao X., Hu L., Zha Z.: Constructing permutation polynomials from piecewise permutations.
Finite Fields Appl. 26, 162--174 (2014).

\bibitem{Charpin}
Charpin P., Kyureghyan G.: When does $G(x) +\gamma Tr(H(x))$ permute $\F_{p^n}$?.
Finite Fields Appl. 15 (5), 615--632 (2009).

\bibitem{Dingpp}
Ding C., Xiang Q., Yuan J., Yuan P.: Explicit classes of permutation polynomials
of $\F_{3^{3m}}$. Sci. China Ser. A 53 (4), 639--647 (2009).

\bibitem{DingCode}
Ding C., Zhou Z.: Binary cyclic codes from explicit polynomials over GF$(2^m)$.
Discrete. Math. 321, 76--89 (2014).

\bibitem{hou}
Fernando N., Hou X.: A piecewise construction of permutation polynomial over finite fields.
Finite Fields Appl. 18, 1184--1194 (2012).

\bibitem{Hou12}
Hou X.: A new approach to permutation polynomials over finite fields. Finite Fields Appl., 18 (3), 492--521 (2012).

\bibitem{Hou15}
Hou X.: Permutation polynomials over finite fields--A survey of recent
advances. Finite Fields Appl., 32, 82--119 (2015).

\bibitem{code}
Laigle-Chapuy Y.: Permutation polynomials and applications to coding theory. Finite Fields Appl. 13(1), 58--70 (2007).

\bibitem{LHT13}
Li N., Helleseth T., Tang X.: Further results on a class of permutation polynomials over
finite fields. Finite Fields Appl. 22, 16--23 (2013).

\bibitem{LM88}
Lidl R., Mullen G.L.: When does a polynomial over a finite field permute the elements of the field?.
Am. Math. Mon. 95 (3), 243--246 (1988).

\bibitem{LM93}
Lidl R., Mullen G.L.: When does a polynomial over a finite field permute the elements of the field? II.
Am. Math. Mon. 100 (1), 71--74 (1993).

\bibitem{crypto}
Lidl R., M{\"{u}}ller W.B.: Permutation polynomials in RSA-cryptosystems.
In: Advances in Cryptology, pp. 293--301. Plenum Press, New York (1984).

\bibitem{LidlFF}
Lidl R., Niederreiter H.: Finite Fields. Cambridge University Press, (1997).

\bibitem{open_Mullen}
Mullen G.L.: Permutation polynomials over Finite Fields. In: Finite Fields, Coding
Theory, and Advances in Commun. and Computing, PP. 131--151. Dekker, (1993).

\bibitem{HFF}
Mullen G.L., Panario D.: Handbook of Finite Fields. CRC Press, (2013).

\bibitem{cmp}
Niederreiter H., Robinson K.H.: Complete mappings of finite fields. J. Austral. Math. Soc. Ser. A 33, 197--212 (1982).

\bibitem{crypto2}
Rivest R.L., Shamir A., Adelman L.M.: A method for obtaining digital signatures and public-key cryptosystems. Commun. ACM 21, 120--126 (1978).

\bibitem{SW10}
Shaheen R., Winterhof A.: Permutations of finite fields for check digit systems. Des. Codes Cryptogr. 57, 361--371 (2010).

\bibitem{SSS11}
Singh R.P., Saikia A., Sarma B.K.: Poly-dragon: an efficient multivariate public key cryptosystem. J. Math. Cryptol. 4, 349--364 (2011).

\bibitem{SGCGS12}
St{\v{a}}nic{\v{a}} P., Gangopadhyay S., Chaturvedi A., Gangopadhyay A.K., Maitra S.: Investigations on Bent and Negabent Functions via the Nega-Hadamard Transform, IEEE Trans. Inf. Theory 58 (6), 4064--4072 (2012).

\bibitem{TZH}
Tu Z., Zeng X., Hu L.: Several classes of complete permutation polynomials. Finite Fields Appl. 25, 182--193 (2014).

\bibitem{Wang-cyc}
Wang Q.: Cyclotomy and permutation polynomials of large indices. Finite
Fields Appl. 22, 57--69 (2013).

\bibitem{Wu_L-1}
Wu B.: The compositional inverse of a class of linearized permutation polynomials over $\F_{2n}$, n odd. Finite Fields Appl. 29, 34--48 (2014).


\bibitem{J.Yuan02}
Yuan J., Ding C., Wang H., Pieprzyk J.: Permutation polynomials of the form $(x^p - x + \delta)^s + L(x)$. Finite Fields Appl. 14, 482--493 (2008).

\bibitem{YD-AGW}
Yuan P., Ding C.: Permutation polynomials over finite fields from a powerful lemma. Finite Fields Appl. 17, 560--574 (2011).

\bibitem{YD-AGW2}
Yuan P., Ding C.: Further results on permutation polynomials over finite fields. Finite Fields Appl. 27, 88--103 (2014).

\bibitem{YZ15}
Yuan P., Zheng Y.: Permutation polynomials from piecewise functions. Finite Fields Appl. 35, 215--230 (2015).

\bibitem{ZZH}
Zeng X., Zhu X., Hu L.: Two new permutation polynomials with the form $(x^{2^k}+x+\delta)^s+x$
over $\F_{2^n}$. Appl. Algebra Engrg. Comm. Comput. 21, 145--150 (2010).

\bibitem{ZH12}
Zha Z., Hu L.: Two classes of permutation polynomials over finite fields. Finite Fields Appl. 18, 781--790 (2012).

\bibitem{ZHC15}
Zha Z., Hu L., Cao X.: Constructing permutations and complete permutations over finite fields via subfield-valued polynomials. Finite Fields Appl., 31, 162--177 (2015).


\bibitem{zyp-1}
Zheng Y., Yuan P., Pei D.: Piecewise constructions of inverses of some permutation polynomials. Finite Fields Appl. 36, 151--169 (2015).

\bibitem{Zi09}
Zieve M.E.: On some permutation polynomials over $\F_q$ of the form $x^rh(x^{(q-1)/d})$. Proc. Amer. Math. Soc. 137, 2209--2216 (2009).
\end{thebibliography}


\end{document}